%% file: hbnDemazure.tex
\documentclass[11pt]{amsart}
\providecommand{\showcomments}{1} 
\usepackage{pfl}
\title{Transmission permutations and Demazure products in Hurwitz--Brill--Noether theory}
\author[N. Pflueger]{Nathan Pflueger}\address{Department of Mathematics, Amherst College}\email{npflueger@amherst.edu}
\date{\today}

\usepackage{import} 

\newcommand{\asp}{\operatorname{ASP}}
\newcommand{\Inv}{\operatorname{Inv}}

\newcommand{\ess}{\operatorname{Ess}}
\newcommand{\sa}{s_\alpha}
\newcommand{\sai}{s_{\alpha^{-1}}}
\newcommand{\sbe}{s_\beta}

\newcommand{\st}{s_\tau}
\newcommand{\sti}{s_{\tau^{-1}}}
\newcommand{\tpql}[1][{}]{\tau^{p_{#1},q_{#1}}_{\cL_{#1}}}
\newcommand{\ca}{{\chi_\alpha}}
\newcommand{\cb}{{\chi_\beta}}

\newcommand{\spql}{s^{p,q}_\cL}
\newcommand{\sk}[1]{\sigma^k_{#1}}
\newcommand{\skm}{\sk{m}}

\newcommand{\ts}[1]{\widetilde{\Sigma}_{#1}}

\begin{document}

\begin{abstract}
A line bundle on a curve with two marked points can be special in many ways, as measured by the global sections of all of its twists by these points. All of this information is conveniently packaged into a permutation, which we call the transmission permutation. We prove that when twice-marked curves are chained together, these permutations are composed via the Demazure product; in reverse, bundles with given permutation can be enumerated via reduced decompositions of a permutation. 
This paper demonstrates the utility of transmission permutations by giving a short derivation of the basic dimension bounds of both classical Brill--Noether theory and Hurwitz--Brill--Noether theory in a unified framework. The difference between the two cases derives from taking permutations in either symmetric groups or affine symmetric groups.
\end{abstract}

\maketitle


\section{Introduction}
\label{sec:intro}

How do you measure how special a line bundle on a curve is? The classical answer is of course the number that was originally called the index of speciality, $h^1(C,\cL)$; or one might just as well say $h^0(C,\cL)$, since it carries the same information. Brill--Noether theory bids you take the two numbers together, since their product is the expected codimension in $\Pic(C)$ of equally special line bundles. Denoting the (projective) dimension of the complete linear series of $\cL$ by $r$, this product is $h^0(\cL)\, h^1(\cL) = (r+1)(g-d+r)$.

If the curve has a marked point $p$, finer distinctions are possible. You may ask for the vanishing orders of $\cL$, which measure inflection; this amounts to asking not just for $h^0(C,\cL)$ but a function $f(n) = h^0(C,\cL(-np))$. To be fully informed, one may as well allow negative $n$. This information is neatly packaged in a combinatorial datum, which is named the \emph{Weierstrass partition} in \cite{pflChains}. A slick way to form it is to plot all the points $\left\{\left(h^0\left(C, \cL\left(-np\right)\right), h^1\left(C, \cL\left(-np\right)\right)\right): n \in \ZZ\right\}$ in $\NN^2$; they form a ``staircase path'' tracing out a Young diagram, and \emph{voil\`a}, a partition. This isn't just a gimmick; the combinatorics of the partition knows about interesting geometry; its size generalizes the number $(r+1)(g-d+r)$ above and tells the expected codimension of equally special bundles, and the number of set-valued Young tableaux of content $\{1,\cdots,g\}$ tells the algebraic Euler characteristic of the Brill--Noether variety \cite{cpEuler,actKclasses}.

But why stop at one marked point? How should one measure how special a line bundle on a twice-marked curve $(C,p,q)$ is? One answer is to track two functions $h^0(C, \cL(-np))$ and $h^0(C, \cL(-nq))$. This has been the standard approach going back to the generalized Brill--Noether theorem of \cite{eh86}, and generalizes nicely to three or more marked points. The story is particularly nice for two marked points, which is the most for which the story behaves well in positive characteristic. In place of a partition, one can still build a combinatorial datum, a \emph{skew tableau}. The size of the skew tableaux tells expected codimension, set-valued tableaux tell the Euler characteristic, and the corners of the tableaux inform you of the singular locus for general $(C,p,q)$ \cite{cop,cpEuler,actKclasses,teixidorBNTwo}. 

Along with \cite{pflVersality}, this paper aims to promote a different, richer combinatorial datum for twice-marked curves. This datum remembers more: a two-variable function $f(a,b) = h^0(C, \cL(ap-bq))$ that knows not just how $p,q$ are inflected, but how they interact. The datum is a permutation $\ZZ \to \ZZ$, which we call the \emph{transmission permutation}. 
Like the Weierstrass partition, this permutation is not a mere bookkeeping device: its combinatorics knows interesting geometric information. In place of the number $(r+1)(g-d+r)$ or the size of a (skew) tableau, the \emph{number of inversions} of the permutation predicts the codimension of equally special divisors, and reduced words take the place of tableaux in enumerative questions. We also get a bonus: by considering permutations in the extended affine symmetric group, we learn not about general points in $\cM_{g,2}$, but general points in a Hurwitz space (as explained below). 
Intimately linked to this story is a curious associative operation on permutations related to tropical matrix multiplication called the Demazure product.

The aim of this paper is modest: to demonstrate the utility of this construction with a short unified proof of the existence of Brill--Noether and Hurwitz--Brill--Noether general curves. See also the partner paper \cite{pflBriefTropBN}, which develops similar notions in the tropical context.

\begin{rem}
    The permutations we use in this paper differ from those used in \cite{pflVersality}, in that increasing, rather than decreasing permutations are the most generic; this is because we describe $h^0(C,\cL(ap-bq))$, rather than $h^0(C, \cL(-ap-bq))$, via permutations. The choice in this paper allows a cleaner use of the Demazure product, and reflects the fact that, when chaining curves together, a generic degree $g$ line bundle has no effect, and thus corresponds to multiplying by the identity permutation.
\end{rem}

\subsection{The transmission permutation}
Let $(C,p,q)$ be a twice-marked smooth curve. If $\cL$ is a line bundle on $C$, there is a unique permutation $\tau = \tpql : \ZZ \to \ZZ$ characterized by
\begin{eqnarray}
\label{eq:tauh0}
h^0(C, \cL(ap-bq)) &=& \#\{n \geq b: \tau(n) \leq a\}, \text{ and }\\
\label{eq:tauh1}
h^1(C, \cL(ap-bq)) &=& \#\{n < b: \tau(n) > a\}, \text{ for all } a,b \in \ZZ.
\end{eqnarray}
%
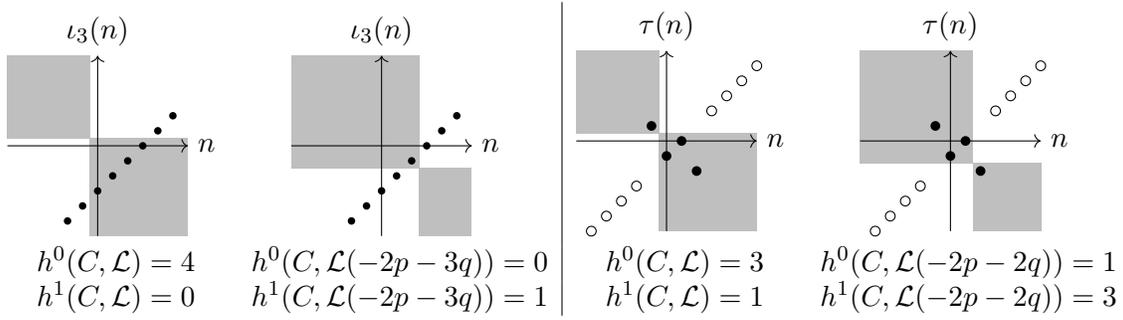
\begin{figure}
\begin{tabular}{cc|cc}
\begin{tikzpicture}[scale=0.2]
\def\a{0}
\def\b{0}
\fill[lightgray] (-0.5+\b, \a+0.5) rectangle (6,-6);
\fill[lightgray] (-0.5+\b, \a+0.5) rectangle (-6,6);

\draw[->] (-6,0) -- (6,0) node[right] {$n$};
\draw[->] (0,-6) -- (0,6) node[above] {$\iota_3(n)$};

\foreach \m in {-2,...,5} {
\node [fill=black, circle, inner sep=1pt] at (\m,-3+\m) {};
}
\end{tikzpicture}
&
\begin{tikzpicture}[scale=0.2]
\def\a{-2}
\def\b{3}
\fill[lightgray] (-0.5+\b, \a+0.5) rectangle (6,-6);
\fill[lightgray] (-0.5+\b, \a+0.5) rectangle (-6,6);

\draw[->] (-6,0) -- (6,0) node[right] {$n$};
\draw[->] (0,-6) -- (0,6) node[above] {$\iota_3(n)$};

\foreach \m in {-2,...,5} {
\node [fill=black, circle, inner sep=1pt] at (\m,-3+\m) {};
}
\end{tikzpicture}

&
\begin{tikzpicture}[scale=0.2]
\def\a{0}
\def\b{0}
\fill[lightgray] (-0.5+\b, \a+0.5) rectangle (6,-6);
\fill[lightgray] (-0.5+\b, \a+0.5) rectangle (-6,6);

\draw[->] (-6,0) -- (6,0) node[right] {$n$};
\draw[->] (0,-6) -- (0,6) node[above] {$\tau(n)$};

\foreach \m in {-5,...,-2} {
\node [draw=black, fill=white, circle, inner sep=1.3pt] at (\m,\m-1) {};
}
\foreach \m in {3,...,6} {
\node [draw=black, fill=white, circle, inner sep=1.3pt] at (\m,\m-1) {};
}
\node [fill=black, circle, inner sep=1.3pt] at (-1,1) {};
\node [fill=black, circle, inner sep=1.3pt] at (0,-1) {};
\node [fill=black, circle, inner sep=1.3pt] at (1,0) {};
\node [fill=black, circle, inner sep=1.3pt] at (2,-2) {};
\end{tikzpicture}
&
\begin{tikzpicture}[scale=0.2]
\def\a{-2}
\def\b{2}
\fill[lightgray] (-0.5+\b, \a+0.5) rectangle (6,-6);
\fill[lightgray] (-0.5+\b, \a+0.5) rectangle (-6,6);

\draw[->] (-6,0) -- (6,0) node[right] {$n$};
\draw[->] (0,-6) -- (0,6) node[above] {$\tau(n)$};

\foreach \m in {-5,...,-2} {
\node [draw=black, fill=white, circle, inner sep=1.3pt] at (\m,\m-1) {};
}
\foreach \m in {3,...,6} {
\node [draw=black, fill=white, circle, inner sep=1.3pt] at (\m,\m-1) {};
}
\node [fill=black, circle, inner sep=1.3pt] at (-1,1) {};
\node [fill=black, circle, inner sep=1.3pt] at (0,-1) {};
\node [fill=black, circle, inner sep=1.3pt] at (1,0) {};
\node [fill=black, circle, inner sep=1.3pt] at (2,-2) {};
\end{tikzpicture}
\\
$h^0(C,\cL) = 4$
&
$h^0(C, \cL(-2p-3q)) = 0$
&
$h^0(C,\cL) = 3$
&
$h^0(C,\cL(-2p-2q)) = 1$
\\
$h^1(C,\cL) = 0$
&
$h^1(C, \cL(-2p-3q)) = 1$
&
$h^1(C,\cL) = 1$
&
$h^1(C,\cL(-2p-2q)) = 3$
\end{tabular}
\caption{Two examples of transmission permutations, and what they say about $h^0$ and $h^1$ of twists $\cL(ap-bq)$ of $\cL$. See Examples~\Cref{eg:iota} and \ref{eg:bitangent}.
}
\label{fig:taus}
\end{figure}
This permutation $\tau$ is called the \emph{transmission permutation} of $\cL$ on $(C,p,q)$. Its existence and uniqueness are established in \Cref{lem:tauExists} below; the reader may find it profitable to prove this now in order to digest the definition. This permutation conveniently packages substantial geometric information about $\cL$ beyond vanishing orders, including the presence of nodes, bitangents, and other features linking $p$ to $q$ (see \cite[Figure 1]{pflVersality} and \Cref{eg:bitangent} below).
Transmission permutations need not have finite length, but they do satisfy the following finiteness condition. 

\begin{defn} \label{def:asp}
Denote by $\asp$ the group of bijections $\tau : \ZZ \to \ZZ$ such that $n \tau(n) > 0$ for all but finitely many integers $n$. We call such permutations \emph{almost-sign-preserving}. The \emph{shift} of an almost-sign-preserving permutation $\tau$ is the number $\ca$ defined by $$\chi_\tau = \# \{ n \geq 0: \tau(n) < 0 \} - \# \{ n < 0 : \tau(n) \geq 0 \}.$$
\end{defn}

The definition of $\tau$ shows that $\chi_{\tau} = \chi(C, \cL(-p)) = d - g$, where $d$ is the degree of $\cL$ and $g$ is the genus of $C$. So the shift divides $\asp$ into cosets corresponding to the degree of the line bundle in question.

\begin{eg} \label{eg:iota}
Let $\tau = \iota_{d-g}$, where $\iota_{d-g}(n) = n - (d-g)$. This permutation has shift $d-g$. Equations~\eqref{eq:tauh0} and \eqref{eq:tauh1} say that $h^0(C,\cL(ap-bq)) = \max(0, a - b + d - g + 1)$ and $h^1(C,\cL(ap-bq)) = \max(0, b - a - d + g - 1)$. This is illustrated in the first two panels of \Cref{fig:taus}. By Riemann--Roch, this is equivalent to saying that $\deg \cL = d$ and every twist $\cL(ap-bq)$ has either $h^0 = 0$ or $h^1 = 0$. In other words, every twist $\cL(ap-bq)$ is nonspecial. Thus $\tau = \iota_{d-g}$ when $\cL$ is a \emph{very general} line bundle of degree $d$ on a genus $g$ curve. So $\iota_{d-g}$ can be seen as the ``generic'' transmission permutation in degree $d$.
\end{eg}

The values of $\tau$ admit a concrete geometric interpretation, as follows. Say that a line bundle $\cL$ \emph{has a node at $p,q$} if $|\cL(-p)| = |\cL(-q)|$, i.e.\ if every global section vanishing at $p$ also vanishes at $q$, and vice versa. Then one can check that for all $a,b \in \ZZ$,
\begin{equation} \label{eq:node}
    \tau(b) = a \quad \text{if and only if} \quad \cL(ap-bq) \; \text{has a node at $p,q$, but no base point at $p$ or $q$.} \end{equation}
With practice, this interpretation allows one to read the transmission permutation $\tau$ from the geometry of $C$, as mapped to projective space by $\cL$. Here is an illustrative example.

\begin{eg} \label{eg:bitangent}
Suppose that $C$ is a smooth plane quartic curve with distinct marked points $p,q$, and $\cL = \omega_C$. Let $\tau$ be the transmission permutation of $\cL$ on $(C,p,q)$. Riemann--Roch implies that $\cL(p+q)$ is base-point-free for all $p,q$, while $\cL(p)$ has a base point at $p$ for all $p$, and similarly for $\cL(q)$ at $q$. Therefore $\tau(-1) = 1$. The reader should carefully note the sign conventions here, which are a bit counterintuitive at first. This much is true without any additional hypotheses on $p,q$. 

Assume now the special case that $p,q$ are the two intersections of $C$ with one of the $28$ bitangent lines in the canonical embedding. That is, $\cL \cong \cO(2p+2q) \cong \omega_C$.
Then $\cL(-p-q)$ has a unique global section up to scale, and that section vanishes to order $2$ at both $p$ and $q$ when regarded as a section of $\cL$. The same is true of $\cL(-2p)$ and $\cL(-2q)$; in each case the unique section (up to scale) corresponds to the bitangent line. It follows that each of $\cL(-p)$, $\cL(-q)$, and $\cL(-2p-2q)$ has a node at $p,q$, but not a base point at either. Therefore $\tau(0) = -1$, $\tau(1) = 0$ and $\tau(2) = -2$.

These four values of $\tau$ are illustrated with filled circles in \Cref{fig:taus}, together with the corresponding values of $h^0$ and $h^1$. In fact, for a general plane quartic $C$ and a choice of bitangent, one can also deduce that $\tau(n) = n-1$ for all $n \leq -2$ and $n \geq 3$; these remaining values are shown in \Cref{fig:taus} with unfilled circles.
As we will see in this paper, the fact that this permutation has five inversions may be understood geometrically: it is a codimension-$5$ condition on a choice of $(C,p,q)$ and $\cL$ to assert that $\cL \cong \omega_C$ ($g=3$ conditions) and that $p,q$ span a bitangent line ($2$ conditions).
\end{eg}

Since the most generic transmission permutation is strictly increasing, we may think of the set of inversions of $\tau$ as measuring the extent to which $\cL$ and its twists at $p,q$ are special. For example, in \Cref{eg:bitangent}, there are five inversions. This is because our choices are special in two ways: $\cL \cong \omega_C$ (a codimension-$3$ condition since $\dim \Pic^4(C) = 3$), and $p,q$ span a bitangent line (a codimension-$2$ condition). It is no coincidence that these codimensions add up to $3+2 = 5$, the number of inversions of $\tau$.

Note in particular that, for any choice of $(C,p,q)$ and $\cL$, if $r = h^0(C,\cL) - 1$ then
 \[\#\{(m,n): m < 0 \leq n, \tau(n) \leq 0 < m\} = h^0(C,\cL)\, h^1(C,\cL) = (r+1)(g-d+r),\] so the expected codimension from Brill--Noether theory is a lower bound on the number of inversions.

\subsection{Transmission loci}
In the other direction, any $\tau \in \asp$ defines a subvariety of $\Pic^{\ca + g}(C)$. Define set-theoretically
\begin{eqnarray*}
W^\tau(C,p,q) &=& \Big\{ [\cL] \in \Pic^{\chi_\tau + g}(C):\ h^0(C,\cL(ap-bq)) \geq \#\{n \geq b: \tau(n) \leq a \} \text{ for all } a,b\in \ZZ \Big\},\\
&=& \Big\{ [\cL] \in \Pic^{\chi_\tau + g}(C):\ h^1(C,\cL(ap-bq)) \geq \#\{n < b: \tau(n) > a \} \text{ for all } a,b\in \ZZ \Big\}.
\end{eqnarray*}
We call this the \emph{$\tau$-transmission locus}. The fact that these two equations are equivalent comes from Riemann--Roch and Equation~\eqref{eq:duality} below. The formulation via a bound on $h^1$ is better-suited for a family of curves, as explained in \Cref{rem:h1}. That remark and surrounding discussion also explain how to give $W^\tau$ a natural scheme structure using Fitting ideals. But for readability, here and elsewhere, we will focus on set-theoretic definitions.

We are particularly interested in \emph{extended $k$-affine permutations}.
\begin{defn}
Let $k=0$ or $k\geq 2$ be an integer.
Denote by $\ts{k} \le \asp$ the group of permutations $\alpha$ such that $\alpha(n+k) = \alpha(n) +k$ for all $n \in \ZZ$. In particular, we let $\ts{0} = \asp$ (this notation differs slightly from that of \cite{pflBriefTropBN}). For $k \geq 2$, these are called the \emph{extended affine symmetric groups}.
\end{defn}
The word ``extended'' is present because when $k \geq 2$ the ``affine symmetric group'' is the subgroup of permutations such that $\sum_{n=0}^{k-1} (\alpha(n)-n) = 0$, or equivalently $\ca = 0$.

We call a twice-marked curve $(C,p,q)$ \emph{$k$-torsion} if $kp \sim kq$ as divisors, and say that $k$ is the \emph{torsion order} if it generates $\{n \in \ZZ: np \sim nq\}$. The definition of $\tpql$ shows that if $(C,p,q)$ is $k$-torsion, $\tpql \in \ts{k}$ for all $\cL$. The converse is also true; if $\tau^{p,q}_{\cO_C} \in \ts{k}$, then it follows that $h^0(C, \cO_C) = h^0(C, \cO_C(kp-kq)) =1$, so $kp \sim kq$.

An \emph{inversion} of $\alpha$ is a pair $(m,n)$ such that $m < n$ and $\alpha(m) > \alpha(n)$, and two inversions $(m,n), (m',n')$ are called $k$-equivalent if $m'-m = n'-n \equiv 0 \pmod{k}$.
The complexity of such permutations is measured by $\inv_k(\alpha)$, the number of $k$-equivalence classes of inversions; $\inv_0(\alpha)$ is simply the number of inversions.
Note that ``extended $0$-affine'' and ``$0$-torsion'' are vacuous conditions, but we allow them to state results in a uniform way.

Studying these transmission loci provides a route to studying the classical Brill--Noether varieties, as well as the more recently studied \emph{splitting loci} of Hurwitz--Brill--Noether theory. In particular,

\begin{enumerate}
\item Given $g,r,d$, there is a permutation $\gamma^r_{d-g}$ such that, for $C$ a genus $g$ curve, the Brill--Noether locus $W^r_d(C)$ is equal to $W^{\gamma^r_{d-g}}(C,p,q)$, regardless of $p,q$. The number of inversions of $\gamma^r_{d-g}$ is $(r+1)(g-d+r)$, the expected codimension of $W^r_d(C)$. 
\item Given any $k\geq 2$ and splitting type $\vec{e} \in \ZZ^k$, there is a permutation $\gamma_{\vec{e}}$ such that for any $(C,p,q)$ with $kp \sim kq$, the splitting locus $W^{\vec{e}}(C,kp)$ is equal to $W^{\gamma_{\vec{e}}}(C,p,q)$. The number of inversions of $\gamma_{\vec{e}}$ is equal to $u(\vec{e})$, the expected codimension of $W^{\vec{e}}(C,kp)$.
\end{enumerate}

See \Cref{sec:specialPerms} for terminology and the precise statements. The construction of these permutations is not novel; $\gamma_{\vec{e}}$ appears in slightly different form in \cite[Theorem 1.4]{larsonLarsonVogt}, for example. The novelty in the present paper is placing them in the context of transmission loci, and offering the Demazure product as a useful device for inductive arguments.

\subsection{The Demazure product}

The fundamental tool in our argument is the \emph{Demazure product} on $\asp$. This is an associative product $\star$ with several nice characterizations, whose properties are developed in detail in \cite{pflDemazure} and summarized in \Cref{sec:demazure} here. Briefly, $\alpha \star \beta$ can be obtained by decreasing $\alpha, \beta$ in Bruhat order until a ``reduced product'' is obtained, and then multiplying them; it is also characterized by a type of matrix multiplication over the min-plus semiring. The fundamental observation on which this entire paper turns is that, if $(C_1, p_1, q_1), (C_2, p_2, q_2)$ are two twice-marked chains (or smooth curves), and $q_1$ is glued to $p_2$ to form $(X, p_1, q_2)$, then for any line bundle $\cL$ on $X$ restricting to $\cL_1, \cL_2$ on $C_1, C_2$,
\begin{equation}
\label{eq:demazure}
\tau^{p_1, q_2}_\cL = \tpql[1] \star \tpql[2].
\end{equation}
This is proved in \Cref{cor:chainT}. In reverse, transmission loci $W^\tau(X,p_1, q_2)$ may be decomposed into a union of products $W^{\alpha}(C_1, p_1, q_1) \times W^{\beta}(C_2, p_2, q_2)$, where the union is taken over \emph{reduced} products $\alpha \beta = \tau$. This is proved in \Cref{prop:decompW}.
What this means for our analysis is that transmission permutations are extremely well-suited to inductive arguments in which curves are repeatedly split into two curves joined at a node. They share this feature with limit linear series, and for this reason \emph{elliptic chains} serve as a natural endpoint for degeneration, just as they do in many applications of limit linear series. If nothing else, one aim of this paper is to make the case that the Demazure product is the most natural underlying combinatorial mechanism for understanding why elliptic chains have been so successful in Brill--Noether theory.

\subsection{Results}
We are concerned in this paper with the following genericity condition. In light of the discussion above, this notion implies classical Brill--Noether generality ($k=0$) and Hurwitz--Brill--Noether generality ($k \geq 2$). In both cases, ``generality'' refers only to loci having the expected dimension, and not to smoothness or stronger conditions.

\begin{defn}
Let $k =0$ or $k\geq 2$, and let $(C,p,q)$ be a twice-marked curve. We say that $(C,p,q)$ has \emph{$k$-general transmission} if
\begin{enumerate}
\item Every transmission permutation $\tau^{p,q}_\cL$ on $(C,p,q)$ is in $\ts{k}$; and
\item For all $\tau \in \ts{k}$, any component of $W^\tau(C,p,q)$ has codimension at least $\inv_k(\tau)$.
\end{enumerate}
In particular, if $\inv_k(\tau) > g$, including the possibility that $\inv_k(\tau) = \infty$, then $W^\tau(C,p,q)$ is empty.
\end{defn}

We conjecture
(\Cref{conj:lowerBoundTtau}) that in fact on \emph{any} $(C,p,q)$ with $kp \sim kq$, every component of $W^\tau(C,p,q)$ has codimension \emph{at most} $\inv_k(\tau)$. This is true for the permutations $\gamma^r_{d-g}$ and $\gamma_{\vec{e}}$ mentioned above by degeneracy locus arguments \cite{kempfSchubert, kleimanLaksov1, kleimanLaksov2} and Larson's theory of splitting loci \cite{larsonDegen,larsonHBN}, respectively. If correct, this definition could be changed to say that $W^\tau(C,p,q)$ is equidimensional of codimension $\inv_k(\tau)$ when $\inv_k(\tau) \leq g$. See \Cref{sec:questions} for some discussion and further conjectures.

Our main result is the following. Here, $\cM_{g,2}$ denotes the moduli stack of twice-marked smooth curves, and $\cH_{g,k,2}$ denotes the substack of $(C,p,q)$ such that $kp \sim kq$, or alternatively the space of degree-$k$ covers $\pi: C \to \PP^1$ with two marked points of total ramification.

\begin{thm}
\label{thm:veryGeneral}
A very general twice-marked curve $(C,p,q)$ in $\cM_{g,2}$ has $0$-general transmission, and such curves are Brill--Noether general; for all $k \geq 2$ a very general point in some component of $\cH_{g,k,2}$ has $k$-general transmission, and such curves are Hurwitz--Brill--Noether general.
\end{thm}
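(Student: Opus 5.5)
The plan is to degenerate to elliptic chains and use upper semicontinuity of fiber dimension. Concretely, let $X$ be a chain of $g$ elliptic curves $(E_i,p_i,q_i)$, with $q_i$ glued to $p_{i+1}$, and marked points $p=p_1$, $q=q_g$. In the case $k\geq 2$ we choose each $(E_i,p_i,q_i)$ so that $p_i-q_i$ is torsion of order exactly $k$; when $k=0$ we take $p_i-q_i$ non-torsion. Such a chain is a limit of smooth twice-marked curves in $\cM_{g,2}$, and for $k\geq2$ of curves in a component of $\cH_{g,k,2}$, since the condition $kp_i\sim kq_i$ on each component smooths to $kp\sim kq$ via admissible covers. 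The transmission loci $W^\tau$ can be defined in families using the $h^1$ formulation and Fitting ideals (\Cref{rem:h1}), so they are closed. By upper semicontinuity, it therefore suffices to show that on the special fiber $X$ every component of $W^\tau(X,p,q)$ has codimension at least $\inv_k(\tau)$ in the (compact-type) Picard variety $\prod \Pic(E_i)$. The same codimension bound then holds on a very general fiber. We take ``very general'' so as to avoid the countably many bad loci, one for each $\tau$.

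The core is a base case together with an induction. For the base case, take one elliptic curve $(E,p,q)$ with $p-q$ of order exactly $k$ (or non-torsion). For a line bundle $\cL$ of degree $d$, $h^0(\cL(ap-bq))$ exceeds its generic value only when $\cL(ap-bq)\cong\cO$, which forces $\cL\cong\cO((b-a)p+\dots)$, a single point of $\Pic^d(E)$ modulo the $k$-periodicity. A direct computation then shows the following. A general $\cL$ has transmission permutation $\iota_{d-1}$, with $\inv_k=0$. The finitely many special $\cL$ (exactly one class modulo $k$-shift) have transmission permutation $\iota_{d-1}$ composed with a single affine transposition $s$ swapping $n$ and $n+1$ in each residue class mod $k$; this permutation has $\inv_k=1$ and is a point, hence of codimension $1$. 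In the $k=0$ case the special $\cL$ gives a simple transposition. This also verifies condition (1) for each elliptic component.

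For the induction, by \Cref{prop:decompW} and \eqref{eq:demazure}, $W^\tau(X,p,q)$ is a union over reduced factorizations $\tau=\alpha_1\cdots\alpha_g$ of products $\prod_i W^{\alpha_i}(E_i,p_i,q_i)$. Each $\alpha_i$ is either the generic permutation (up to shift) or a generic permutation times an elementary affine transposition, so each factor has codimension equal to $\inv_k(\alpha_i)\in\{0,1\}$. Reducedness gives $\inv_k(\tau)=\sum_i \inv_k(\alpha_i)$. Here we need that the length function $\inv_k$ is additive on reduced products in $\ts{k}$, which I take from the Demazure product background in \Cref{sec:demazure}. Hence every piece has codimension exactly $\inv_k(\tau)$. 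Condition (1) on $X$ follows because the Demazure product of elements of $\ts{k}$ stays in $\ts{k}$. Condition (1) on the general fiber holds automatically for $k\ge2$, since $kp\sim kq$ there; for $k=0$ it is vacuous.

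Finally, Brill--Noether and Hurwitz--Brill--Noether generality follow from the identifications $W^r_d(C)=W^{\gamma^r_{d-g}}(C,p,q)$ and $W^{\vec e}(C,kp)=W^{\gamma_{\vec e}}(C,p,q)$ together with the inversion counts of \Cref{sec:specialPerms}. I expect the main obstacle to be the semicontinuity step. One has to make sure that transmission loci, and the property of having transmission permutation in $W^\tau$, specialize correctly to the compact-type chain. This means defining $W^\tau$ over a family whose fibers are the smooth curve and $X$, using line bundles on the chain with arbitrary multidegrees. It also means checking that the relevant $h^1$ bounds are closed conditions in the relative Picard scheme. A second delicate point is producing, for $k\ge2$, a smoothing of the elliptic chain inside $\cH_{g,k,2}$; I would handle this with admissible covers of degree $k$ totally ramified at the nodes.
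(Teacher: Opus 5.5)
Your proposal is correct and follows essentially the same route as the paper: degenerate to a chain of elliptic curves each with torsion order exactly $k$, use the genus-$1$ computation (\Cref{thm:genus1GT}) as the base case, pass to the chain via \Cref{prop:decompW} and additivity of $\inv_k$ on reduced tuples (\Cref{thm:kGTChain}), transfer the bound to a very general smooth fiber by upper semicontinuity of $\dim W^\tau$ (\Cref{thm:semicontinuityT}), and conclude via the identifications of \Cref{sec:specialPerms}. In the semicontinuity step, the family version must impose the $h^1$ conditions for all twists by $\cY(\vec{n})$, which is what your ``arbitrary multidegrees'' remark points to; also, for $k=0$ the special bundles $\cO_E(mq+(d-m)p)$ are countably many rather than finitely many, but each $W^{\iota_{d-1}\sigma^0_{m-1}}(E,p,q)$ is still a single point, so nothing changes.
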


The phrase ``some component of'' above is necessary because, in characteristic $p$, $\cH_{g,k,2}$ may be reducible. In characteristic $0$ this phrase is not needed.
The $k=0$ case of this theorem follows from \cite[Theorem 1.12]{pflVersality}, which also gives an existence and smoothness statement, and uses somewhat different methods. It is not clear whether the ``versality of flags'' point of view in that paper can be adapted to the $k \geq 2$ situation.

We will consider certain degenerations, namely to chains of twice-marked curves. For our purposes, a \emph{twice-marked chain} is a twice-marked nodal curve $(X,p_1, q_\ell)$ obtained from $\ell$ twice-marked smooth curves $(C_i, p_i, q_i),$ $1 \leq i \leq \ell$, by gluing $q_i$ to $p_{i+1}$ for $1 \leq i < \ell$. In particular, whenever we say ``twice-marked chain,'' we will always assume that the marked points are at opposite ends of the chain. 
We allow $\ell=1$, so that ``twice-marked chains'' include ``twice-marked smooth curves.''
We will extend the definition of transmission permutations from smooth curves to such chains in \Cref{sec:transmission}. With this terminology in place, we will prove:

\begin{enumerate}
\item In a family of twice-marked chains (possibly including smooth curves), the function sending $(C,p,q) \mapsto \dim W^\tau(C,p,q)$ is upper semicontinuous (\Cref{thm:semicontinuityT}).
\item A genus-$1$ curve has $k$-general transmission if and only if the marked points differ by torsion of order exactly $k$ (\Cref{thm:genus1GT}).
\item General transmission is ``chainable:'' A twice-marked chain has $k$-general transmission if and only if every component in the chain has $k$-general transmission (\Cref{thm:kGTChain}).
\end{enumerate}

In particular, these results together imply \Cref{thm:veryGeneral}, by considering elliptic chains with torsion order $k$ on all components. We must say ``very general'' in that theorem since countably many permutations $\tau$ must be considered. 

\section*{Conventions}
We work over an algebraically closed field of any characteristic.
By \emph{curve}, we always mean a connected, proper, nodal curve. 
A \emph{twice-marked curve} is a curve with two distinct marked smooth points, and a \emph{twice-marked chain} is always assumed to have its marked points at opposite ends of the chain.

The symbol $\NN$ denotes the set of \emph{nonnegative integers}. The symbol $\delta$ always denotes an indicator function, equal to $1$ if the statement within is true, and $0$ otherwise. A \emph{permutation} always refers to a permutation of $\ZZ$.

\section{The Demazure product on \texorpdfstring{$\asp$}{ASP}}
\label{sec:demazure}

This section summarizes material about the Demazure product on $\asp$ from \cite{pflDemazure}, much of which is standard in Coxeter groups.

The Demazure product on $\asp$ may be defined via the functions
$$\sa(a,b) = \# \{n \geq b: \alpha(n) < a\}$$
associated to permutations $\alpha \in \asp$. These are called \emph{(submodular) slipface functions} in \cite[Definition 3.1]{pflDemazure}, and are closely related to \emph{rank functions} on finite symmetric groups. The Demazure product is uniquely characterized by ``min-plus matrix multiplication'' \cite[Theorem A]{pflDemazure}
\begin{equation}
\label{eq:demazureDefn}
s_{\alpha \star \beta}(a,b) = \min_{\ell \in \ZZ} \left\{ \sa(a,\ell) + \sbe(\ell,b) \right\}.
\end{equation}

This equation undoubtedly appears strange at first; the reader may wish to ``try it out'' by checking the special case: if $\alpha, \beta^{-1}$ have no inversions in common, then $\alpha \star \beta = \alpha \beta$ (the converse also holds) \cite[Lemma 5.1]{pflDemazure}; this is easiest to understand when $\beta$ is a simple transposition.

We will require a criterion for obtaining a permutation from a slipface function.

\begin{cor}[{\cite[Proposition 7.12]{pflDemazure}}]
\label{cor:alphaExists}
Let $s: \ZZ^2 \to \NN$ be a function. Suppose that 
\begin{enumerate}
\item There exists integers $M, \chi$ such that $a - b \leq -M$ implies $s(a,b) = 0$ and $a-b \geq M$ implies $s(a,b) = \chi + a - b$; and
\item For all $a,b \in \ZZ$, $s(a+1,b) - s(a,b) - s(a+1,b+1) + s(a,b+1) \geq 0$ ($s$ is \emph{submodular}).
\end{enumerate}
Then there exists a unique permutation $\alpha \in \asp$ such that $s(a,b) = s_\alpha(a,b)$. The shift of $\alpha$ is the number $\chi$ mentioned in criterion (1). This permutation has \emph{bounded difference}, meaning that $|\alpha(n) - n|$ is bounded for $n \in \ZZ$.
\end{cor}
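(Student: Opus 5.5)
Given $s$ satisfying (1) and (2), I will build $\alpha$ directly from the second mixed difference. Define
\[
 m(a,b) = s(a+1,b) - s(a,b) - s(a+1,b+1) + s(a,b+1) \ge 0 .
\]
For $s=s_\alpha$ one computes $m(a,b)=\delta(\alpha(b)=a)$. The mixed difference telescopes: summing over $n\ge b$ gives $s_\alpha(a+1,b)-s_\alpha(a,b)$, which counts the $n\ge b$ with $\alpha(n)=a$. So the plan is to show that $m$ is the indicator of the graph of a bijection $\alpha$, and then that $s=s_\alpha$. Uniqueness is immediate from this same formula, since $s_\alpha$ determines every value $\alpha(b)$.

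The key steps are as follows.

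\textbf{Step 1 (row and column sums).} Fix $b$. By (1), for $a\ll b$ every term of $m$ vanishes, and for $a\gg b$ the terms combine to $(\chi+a+1-b)-(\chi+a-b)-(\chi+a-b)+(\chi+a-b-1)=0$. Hence $m(\cdot,b)$ has finite support, and the sum $\sum_a m(a,b)$ telescopes to the limit of the first difference in $b$, namely
\[
 \bigl[s(a,b)-s(a,b+1)\bigr]_{a\gg b}-\bigl[s(a,b)-s(a,b+1)\bigr]_{a\ll b}=1-0=1 .
\]
By the symmetric argument in $a$, each column sum $\sum_b m(a,b)$ equals $1$. Since $m$ takes nonnegative integer values, each row contains exactly one $1$. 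Setting $\alpha(b)$ to be that $a$ gives a function whose fibers are singletons, so $\alpha$ is a bijection.

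\textbf{Step 2 (bounded difference).} If $m(a,b)\neq 0$, then (1) forces $|a-b|<M+1$ or so. The idea is that in the region $a-b\le -M$ (with a one-step margin) all four values are $0$, and in the region $a-b\ge M$ they are affine. Hence $|\alpha(n)-n|$ is bounded. In particular $n\alpha(n)>0$ for all but finitely many $n$, so $\alpha\in\asp$.

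\textbf{Step 3 ($s=s_\alpha$).} The difference $t=s-s_\alpha$ has zero mixed difference. It therefore decomposes as $t(a,b)=f(a)+g(b)$. Moreover $t$ vanishes on the region $a-b\le -M'$, for $M'$ large, because both $s$ and $s_\alpha$ vanish there; $s_\alpha$ does by bounded difference. For any $a,b$, I can pick $b'$ with $b'\ge a+M'$. Then $f(a)+g(b')=0$, and $f(a_2)+g(b')=0$ whenever $b'$ is large relative to both $a$ and $a_2$. So $f$ is constant, and likewise $g$, with $f+g=0$. Thus $t\equiv 0$.

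\textbf{Step 4 (shift).} For $a-b\gg 0$, $s_\alpha(a,b)=\chi_\alpha+a-b$; this is the same counting that gives $\chi_\tau=\chi(\cL(-p))$. Comparing with (1) gives $\chi_\alpha=\chi$. To verify this, I will evaluate $s_\alpha(0,b)$ for $b\ll 0$ by counting $n\ge b$ with $\alpha(n)<0$, splitting into $n\ge 0$ and $b\le n<0$.

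The main obstacle is the bookkeeping at the boundary of the region in (1): confirming that the telescoping limits in Step 1 are exactly $1$ and $0$, and that the margins in Step 2 are handled correctly. None of this is deep, but the sign conventions ($\ge b$ versus $<a$) must be tracked carefully.
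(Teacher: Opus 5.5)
Your proposal is correct. This paper gives no proof of its own for this statement; it imports it from \cite[Proposition 7.12]{pflDemazure}. Your sketch is therefore a self-contained substitute for that citation, using the standard idea that the graph of $\alpha$ is the mixed second difference of $s_\alpha$, just as for rank functions of finite permutations.

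I checked the bookkeeping you flagged:
\begin{itemize}
\item \textbf{Steps 1 and 2.} The four values entering $m(a,b)$ all lie in the zero region once $a\le b-M-1$, and all lie in the affine region once $a\ge b+M+1$. So row and column sums telescope to $1-0$ as you say, and $m$ is supported in $|a-b|\le M$. Hence $|\alpha(n)-n|\le M$, which gives both $\alpha\in\asp$ and the vanishing of $s_\alpha$ on $a-b\le -M$ that you use in Step 3.
\item \textbf{Step 4.} For $b\le -M$ one gets $s_\alpha(0,b)=\#\{n\ge 0:\alpha(n)<0\}+(-b)-\#\{n<0:\alpha(n)\ge 0\}=\chi_\alpha-b$, which matches $\chi-b$ from (1).
\item \textbf{Uniqueness.} This holds as claimed. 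For any $\alpha\in\asp$ the counts defining $s_\alpha$ are finite, so $m=\delta(\alpha(b)=a)$ holds without assuming bounded difference.
\end{itemize}

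One optional streamlining: Step 3 can be replaced by a direct double telescoping,
\[
s(a,b)=\sum_{b'\ge b}\sum_{a'<a}m(a',b').
\]
The inner sum equals $s(a,b')-s(a,b'+1)$, since that difference vanishes for $a'\ll b'$. The outer sum terminates because $s(a,b')=0$ for $b'\ge a+M$. The right side is then $\#\{b'\ge b:\alpha(b')<a\}=s_\alpha(a,b)$ immediately, which avoids the $f(a)+g(b)$ decomposition. Your version is also fine.
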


The functions $\sa$ also define the \emph{Bruhat order} on $\asp$: $\alpha \leq \beta$ means $\sa(a,b) \leq \sbe(a,b)$ for all $a,b \in \ZZ$. We will almost never use Bruhat order to compare permutations with different shifts. Bruhat order provides a second, perhaps more intuitive definition of the Demazure product \cite[Theorem B]{pflDemazure}: it is the Bruhat-maximum of all ordinary products of Bruhat-smaller permutations:
\begin{equation}
\label{eq:starMax}
\alpha \star \beta = \max \{ \alpha_1 \beta_1: \alpha_1 \leq \alpha, \beta_1 \leq \beta \}.
\end{equation}
The shift of a permutation $\alpha \in \asp$ determines the asymptotic behavior of $\sa$. This is revealed by the following identity \cite[Equation (15)]{pflDemazure}.
\begin{equation}
\label{eq:duality}
\sa(a,b) - \sai(b,a) = \ca + a - b
\end{equation}
In particular, if $\ca = \cb$ then $\alpha \leq \beta$ if and only if $\alpha^{-1} \leq \beta^{-1}$ \cite[Lemma 2.1]{pflDemazure}.

The shift map $\alpha \mapsto \chi_\alpha$ is a homomorphism for both $\star$ and ordinary multiplication \cite[Equation (16) and Theorem 4.4]{pflDemazure}.
\begin{equation}
\label{eq:shiftHom}
\chi_{\alpha \star \beta} = \chi_{\alpha \beta} = \ca + \cb
\end{equation}
A crucial step in our argument is the reduction of Demazure products to ordinary products. The definition below is adapted from \cite[Lemma 6.2]{pflDemazure}. The fact that $\ts{k}$ is closed under $\star$ and the auxiliary operation $\triangleleft$ used in \cite[Theorem 6.5]{pflDemazure} is noted in \cite[\S 8.4]{pflDemazure}. With these in hand, the following theorem is the specialization of \cite[Theorem 6.5]{pflDemazure} to the subgroup $\ts{k}$.

\begin{defn}
A tuple $(\alpha_1, \cdots, \alpha_\ell)$ is called \emph{reduced} if 
$\Inv(\alpha_1 \cdots \alpha_\ell)$ is equal to the \emph{disjoint} union of the sets
$\left\{ \left( (\alpha_{n+1} \cdots \alpha_\ell)^{-1}(u), \left( \alpha_{n+1} \cdots \alpha_\ell\right)^{-1}(v) \right):\ (u,v) \in \Inv(\alpha_n) \right\}$ for $1 \leq n \leq \ell$.
\end{defn}

\begin{thm}[{\cite[Theorem 6.5]{pflDemazure}}]
\label{thm:reduce}
Let $\alpha_1, \cdots, \alpha_\ell, \gamma \in \ts{k}$, and suppose $\alpha_1 \star \cdots \star \alpha_\ell \geq \gamma$ and $\sum \chi_{\alpha_n} = \chi_\gamma$. Then there exists a \emph{reduced} $\ell$-tuple $(\beta_1, \cdots, \beta_\ell)$ in $\ts{k}$ such that $\chi_{\beta_i} = \chi_{\alpha_i}$ and $\beta_i \leq \alpha_i$ for all $i$, and $\beta_1 \star \cdots \star \beta_\ell = \beta_1 \cdots \beta_\ell = \gamma$. In particular, $\sum_{i=1}^\ell \inv_k(\beta_i) = \inv_k(\gamma)$.
\end{thm}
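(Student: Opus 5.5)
The plan is to prove the theorem first in all of $\asp$, and then to descend to $\ts{k}$ by an equivariance argument. The equivariance step is where the hypothesis $\alpha_i\in\ts{k}$ enters.

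\emph{Reduction to $\ell=2$.} Induct on $\ell$, using associativity of $\star$. Write $\alpha' = \alpha_2\star\cdots\star\alpha_\ell$. Then $\alpha_1\star\alpha'\geq\gamma$, and by \eqref{eq:shiftHom} the shifts add up correctly.
\begin{itemize}
\item The case $\ell=2$ gives $\beta_1\leq\alpha_1$ and $\gamma'\leq\alpha'$ with $\beta_1\gamma'=\gamma$ reduced and with the correct shifts.
\item The inductive hypothesis, applied to $\alpha_2,\dots,\alpha_\ell$ and $\gamma'$, then splits $\gamma'$ into a reduced product $\beta_2\cdots\beta_\ell$.
\end{itemize}
Concatenating gives the $\ell$-tuple. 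It is reduced because the inversion set of $\gamma$ is the disjoint union of $\Inv(\gamma')$ and the $\gamma'^{-1}$-transport of $\Inv(\beta_1)$. In turn, $\Inv(\gamma')$ is already the disjoint union of the transports of the $\Inv(\beta_i)$ for $i\geq 2$. Finally, $\beta_1\star\cdots\star\beta_\ell=\beta_1\cdots\beta_\ell$ holds because a reduced product has no common inversions between consecutive factors, by \cite[Lemma 5.1]{pflDemazure}.

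\emph{The case $\ell=2$.} Make a canonical choice. Among all $\delta\leq\alpha_2$ with $\chi_\delta=\chi_{\alpha_2}$ such that $\gamma\delta^{-1}\leq\alpha_1$, take $\beta_2$ to be Bruhat-maximal, and set $\beta_1=\gamma\beta_2^{-1}$. This is the operation $\triangleleft$ of \cite{pflDemazure}. Two things must be shown.
\begin{itemize}
\item \emph{The set is nonempty and has a unique maximum.} Nonemptiness comes from \eqref{eq:starMax}. Uniqueness of the maximum comes from the lattice-like behaviour of the slipface functions: take pointwise minima and maxima of the $s_\delta$, and reconstruct permutations using \Cref{cor:alphaExists}.
\item \emph{The resulting pair is reduced.} If some inversion were shared between $\beta_1$ and $\beta_2^{-1}$, multiply by the corresponding transposition. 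This would enlarge $\beta_2$ in Bruhat order while keeping $\gamma\beta_2^{-1}\leq\alpha_1$, contradicting maximality.
\end{itemize}
I expect this exchange argument to be the main obstacle, because Bruhat order on $\asp$ is not that of a Coxeter group of finite rank. I would first try to reduce to finite symmetric groups. All the permutations involved have bounded difference, so they agree with a fixed shift $\iota_\chi$ outside a large window. On that window, the classical Demazure/subword theory of $S_N$ applies, and the resulting choices are compatible as the window grows.

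\emph{Descent to $\ts{k}$ and the count of $\inv_k$.} Let $t(n)=n+k$. Conjugation $\alpha\mapsto t\alpha t^{-1}$ is an automorphism of $\asp$, and it preserves each of the following:
\begin{itemize}
\item the slipface functions, up to translating both arguments by $k$;
\item hence Bruhat order and, by \eqref{eq:demazureDefn}, the product $\star$;
\item hence the operation $\triangleleft$.
\end{itemize}
Since every choice above was a unique maximum, the construction commutes with this conjugation. So if $\alpha_i$ and $\gamma$ are fixed by it, then so are all the $\beta_i$; that is, $\beta_i\in\ts{k}$.

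Now consider the disjoint-union decomposition of $\Inv(\gamma)$ given by reducedness. The transport maps there are equivariant for the diagonal action of $t$ on pairs, because each $\beta_i$ commutes with $t$. So the decomposition descends to $k$-equivalence classes of inversions, which gives $\sum_i\inv_k(\beta_i)=\inv_k(\gamma)$. This identity is read in $\NN\cup\{\infty\}$ if some of the counts are infinite.
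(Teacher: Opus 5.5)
There is a genuine gap in your case $\ell=2$: the construction itself is wrong, not merely hard to justify. Your reduction to $\ell=2$, the conjugation argument for landing in $\ts{k}$, and the count of $k$-equivalence classes of inversions are fine.

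Here is a counterexample. Inside $\asp$, take the symmetric group on $\{0,1,2\}$ with $\sigma=\sigma^0_0$ and $\sigma'=\sigma^0_1$. Let $\alpha_1=\alpha_2=\sigma\sigma'$ and $\gamma=\mathrm{id}$. All shifts are $0$, and $\alpha_1\star\alpha_2\ge\gamma$ holds because, by \eqref{eq:duality}, $\mathrm{id}$ is the Bruhat minimum among shift-$0$ permutations. Your set $\{\delta\le\alpha_2:\ \gamma\delta^{-1}\le\alpha_1\}$ is $\{\mathrm{id},\sigma,\sigma'\}$, since $(\sigma\sigma')^{-1}=\sigma'\sigma\not\le\sigma\sigma'$. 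This set has no maximum. Each maximal element yields $\beta_1=\beta_2\in\{\sigma,\sigma'\}$, which is a non-reduced factorization of $\mathrm{id}$. The only valid output, $\beta_1=\beta_2=\mathrm{id}$, is the \emph{minimum}.

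Three of your claims fail here.
First, Bruhat order is not a lattice: $\sigma$ and $\sigma'$ have two minimal upper bounds, $\sigma\sigma'$ and $\sigma'\sigma$. So pointwise maxima or minima of slipface functions are in general not submodular, and Corollary~\ref{cor:alphaExists} cannot be invoked.
Second, the exchange step runs backwards. Suppose $\beta_1$ and $\beta_2^{-1}$ share an inversion $(u,v)$ and $t$ swaps $u$ and $v$. Then $\beta_1 t<\beta_1$ and $t\beta_2<\beta_2$, with $(\beta_1 t)(t\beta_2)=\gamma$. So cancelling a shared inversion \emph{shrinks} $\beta_2$, and maximality gives no contradiction.
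Third, nonemptiness does not follow from \eqref{eq:starMax}. That identity only identifies the maximum of the set of products $\alpha_1'\alpha_2'$ with $\alpha_i'\le\alpha_i$. It does not say that every $\gamma\le\alpha_1\star\alpha_2$ of the right shift is such a product, and that downward closure is essentially the theorem itself.

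Your fallback of reducing to finite windows also rests on false premises. Elements of $\ts{0}=\asp$ need not have bounded difference. Moreover, bounded-difference elements such as $\sigma^k_0$ with $k\ge 2$ agree with no $\iota_\chi$ outside any finite window.

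What the argument needs is a canonical choice with reducedness built in. For instance, write $\beta_2=\beta_1^{-1}\gamma$ with $\beta_1^{-1}\le\alpha_1^{-1}$, and take $\beta_2$ to be the least element of $\{x\gamma:\ x\le\alpha_1^{-1},\ \chi_x=-\chi_{\alpha_1}\}$. One must then prove three things:
\begin{enumerate}
\item this set has a unique minimum (in Coxeter groups this is a standard Demazure-type fact, not a lattice property);
\item the hypothesis $\alpha_1\star\alpha_2\ge\gamma$ forces this minimum to lie below $\alpha_2$, which is where that hypothesis genuinely enters;
\item the resulting factorization is reduced.
\end{enumerate}
Providing such a canonical construction, through the auxiliary operation $\triangleleft$, is what \cite[Theorem 6.5]{pflDemazure} does. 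The paper does not reprove this. It cites that theorem and notes, via \cite[\S 8.4]{pflDemazure}, that $\ts{k}$ is closed under $\star$ and $\triangleleft$. That closure is exactly what your conjugation argument would establish once the construction is canonical.
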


\section{Transmission permutations on smooth curves and chains}
\label{sec:transmission}

Let $(C,p,q)$ be a twice-marked smooth curve of genus $g$, and $\cL$ be a degree $d$ line bundle on $C$.
We gave a description of the transmission permutation $\tpql$ in the introduction. In this section we revisit that definition, prove that it is well-defined, and then extend it to chains of curves.
First, define the \emph{transmission function} $\spql$ by
$$\spql(a,b) = h^0\Big( C, \cL\big(\left(a-1\right)p-bq\big)\Big).$$ 
We write $a-1$ rather than $a$ in this definition because it is a necessary (though headache-inducing!) correction to ensure that transmission permutations are combined using the Demazure product.
Let $\chi = d-g$. Riemann--Roch implies that $\spql$ satisfies Criterion (1) of \Cref{cor:alphaExists}, and Criterion (2) follows from the observation that, letting $V_{a,b} = H^0\Big(C, \cL\big((a-1)p - bq\big)\Big)$ and regarding all these as subspaces of $H^0(C \backslash \{p,q\}, \cL)$,
$$\spql(a+1,b) - \spql(a,b) - \spql(a+1,b+1) + \spql(a,b+1) = \dim V_{a+1,b} / \left( V_{a,b} + V_{a+1,b+1} \right) \geq 0.$$

Furthermore, Equation~\eqref{eq:duality} and Riemann--Roch imply $\sti(b,a) = h^1\Big(C, \cL\big((a-1)p-bq\big)\Big)$, hence

\begin{deflem}
\label{lem:tauExists}
For any twice-marked smooth curve $(C,p,q)$ and line bundle $\cL$,
there exists a permutation $\tau = \tpql$ satisfying 
$\spql = \st \text{ and } s^{q,p}_{\omega_C(p+q) \otimes \cL^\vee} = \sti$, and therefore
Equations \eqref{eq:tauh0}, \eqref{eq:tauh1}.
Call $\tpql$ the \emph{transmission permutation} of $\cL$ on $(C,p,q)$.
\end{deflem}

As mentioned in the introduction, these definitions imply that
\begin{equation} \label{eq:shiftTau}
    \chi_{\tpql} = \deg \cL - g.
\end{equation}

\begin{rem}
\label{rem:shiftTwist}
If $\iota_n$ denotes the shift permutation $\iota_n(m) = m-n$, then $\chi_{\iota_n} = n$, and 
\[s_{\iota_n \alpha}(a,b) = \sa(a+n,b)\]
 for all $a,b \in \ZZ$. So $\iota_n \tau^{p,q}_\cL = \tau^{p,q}_{\cL(np)}$ for all line bundles $\cL$, and $W^{\tau}(C,p,q) \cong W^{\iota_n \tau}(C,p,q)$ for all permutations $\tau$, via the ``twist by $np$'' map. This is a convenient way to reduce certain statements to the shift $0$, or alternatively to the degree $0$, case.
\end{rem}

\subsection{Chains of twice-marked curves}
We now extend our definition of transmission loci to certain nodal curves, namely twice-marked chains. 
We work with chains for simplicity, but similar definitions can be made for curves of compact type, and the reader familiar with the theory of enriched structures in the sense of Main\`o \cite{mainoEnrichedStable} (see also \cite{bieselHolmesEnriched,abreuPaciniEnriched}) will see that these definitions naturally extend to that context as well. The primary catch is that transmission functions need not be submodular, so transmission permutations may not exist, in these more general settings. This is related to the fact that in the tropical context \cite{pflBriefTropBN}, transmission functions are not submodular in general.

Fix a single twice-marked chain $(X, p_1, q_\ell)$ obtained by joining $(C_1, p_1, q_1), \cdots, (C_\ell, p_\ell, q_\ell)$. For each node $q_i \in \{q_1, \cdots q_{\ell-1}\}$, there is a line bundle (unique up to isomorphism) $\cY_i$
such that 
$$
\cY_i \mid_{C_i} \cong \cO_{C_i}(-q_i),\ \cY_i \mid_{C_{i+1}} \cong \cO_{C_{i+1}}(p_{i+1}) \text{ and } \cY_i \mid_{C_j} \cong \cO_{C_j} \text{ for all } j \neq i,i+1.$$
 For any $\vec{n} \in \ZZ^{\ell-1}$, define $\cY(\vec{n}) = \bigotimes_{i=1}^{\ell-1} \cY_i^{\otimes n_i}$. 

\begin{defn}
\label{defn:tauChain}
The \emph{transmission function} $s^{p_1, q_\ell}_\cL: \ZZ^2 \to \NN$ of a line bundle $\cL$ on $X$ is
$$s^{p_1, q_\ell}_\cL(a,b) = \min_{\vec{n} \in \ZZ^{\ell-1}} h^0(X, \cL((a-1)p_1-bq_\ell) \otimes \cY(\vec{n})).$$
\end{defn}

\begin{defn} \label{def:WtauChain}
Let $\vec{d} = (d_1, \cdots, d_\ell) \in \ZZ^\ell$, let $d = \sum d_i$, and
denote by $\Pic^{\vec{d}}(X)$ the component of the Picard scheme parameterizing line bundles of degree $d_i$ on $C_i$ for each $i$.
Let $\tau \in \asp$. For any choice of $\vec{d}$ satisfying $\chi_\tau = d - g$, define
$$W^\tau_{\vec{d}}(X,p_1, q_\ell) = \Big\{ [\cL] \in \Pic^{\vec{d}}(X):\ s^{p_1,q_\ell}_\cL(a,b) \geq \st(a,b) \text{ for all } a,b \in \ZZ \Big\}.$$
Define the \emph{transmission locus} $W^\tau(X,p_1, q_\ell)$ to be $W^\tau_{(d,0,\cdots,0)}(X, p_1, q_\ell)$.
\end{defn}

\begin{defn} \label{def:kGTChain}
As for a single curve, we say that a twice-marked chain $(X,p_1,q_\ell)$ has \emph{$k$-general transmission} if every transmission permutation $\tau^{p_1, q_\ell}_\cL$ is in $\ts{k}$, and for all $\tau \in \ts{k}$, any component of $W^\tau(X,p_1,q_\ell)$ has codimension at least $\inv_k(\tau)$.
\end{defn}

The choice $\vec{d} = (d,0,\cdots,0)$ matches the notation of \cite{lieblichOsserman}, but any other choice would work as well: the various loci $W^\tau_{\vec{d}}$ differ merely by twists of $\cY(\vec{n})$.
Note, by Equation~\eqref{eq:duality} and Riemann--Roch, this bound on $h^0(X, \cL(ap_1-bq_\ell) \otimes \cY(\vec{n}))$ is equivalent to bounding $h^1(X, \cL(ap_1-bq_\ell) \otimes \cY(\vec{n}))$ by $\sti(b,a+1)$.

To carry out our analysis, we require an alternative form for $s^{p_1, q_\ell}_\cL$ stated purely in terms of the individual line bundles. To obtain it, we require a basic lemma about line bundles on nodal curves. The reader who is frustrated with the insidious ``$-1$''s that creep into many of our definitions (such as that of $s^{p_1,q_\ell}_\cL$ above) may direct their frustration at this lemma, whence these goblins originate.

\begin{lemma}
Let $X$ be a nodal curve, with a node $p$ that separates it into two nodal curves $X_1, X_2$. Let $\cL$ be a line bundle on $X$, with $\cL_1, \cL_2$ the restrictions to $X_1, X_2$. For $n \in \ZZ$, let $\cY(n)$ be a line bundle on $X$ with $\cY(n) \mid_{X_1} \cong \cO_{X_1}(-np)$ and $\cY(n) \mid_{X_2} \cong \cO_{X_2}(np)$.
Then
$$
\min_{n \in \ZZ} \left\{ h^0(X, \cL \otimes \cY(n)) \right\} = \min_{n \in \ZZ} \left\{ h^0(X_1, \cL_1(-np)) + h^0(X_2, \cL_2((n-1)p)) \right\}.
$$
\end{lemma}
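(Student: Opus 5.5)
The plan is to compare the two sides term by term, using the standard description of sections on a curve with a separating node. For any line bundle $\mathcal{M}$ on $X$ with restrictions $\mathcal{M}_1,\mathcal{M}_2$, there is an exact sequence
\[
0 \to \mathcal{M} \to \mathcal{M}_1 \oplus \mathcal{M}_2 \to \mathcal{M}|_p \to 0,
\]
where the last map is the difference of the evaluations at $p$. Since $p$ is separating, $H^0(X,\mathcal{M})$ is exactly the set of pairs $(s_1,s_2)$ of sections with $s_1(p)=s_2(p)$. The image of $H^0(\mathcal{M}_1)\oplus H^0(\mathcal{M}_2)$ in the one-dimensional fiber is zero precisely when $p$ is a base point of both $\mathcal{M}_1$ and $\mathcal{M}_2$. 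Hence
\[
h^0(X,\mathcal{M}) = h^0(X_1,\mathcal{M}_1)+h^0(X_2,\mathcal{M}_2)-1+\delta(p \text{ is a base point of both } \mathcal{M}_1 \text{ and } \mathcal{M}_2).
\]

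I will apply this with $\mathcal{M}=\cL\otimes\cY(n)$, so that $\mathcal{M}_1=\cL_1(-np)$ and $\mathcal{M}_2=\cL_2(np)$. Write
\[
f(n)=h^0(X_1,\cL_1(-np)), \qquad g(n)=h^0(X_2,\cL_2((n-1)p)).
\]
Then $f$ is nonincreasing and $g$ is nondecreasing, each with steps of size $0$ or $1$. Moreover, $p$ is a base point of $\cL_1(-np)$ iff $f(n+1)=f(n)$, and $p$ is a base point of $\cL_2(np)$ iff $g(n)=g(n+1)$. Setting $R(n)=f(n)+g(n)$ for the right-hand terms, the formula above becomes
\[
h^0(X,\cL\otimes\cY(n)) = f(n)+g(n+1)-1+\delta\big(f(n+1)=f(n) \text{ and } g(n+1)=g(n)\big).
\]

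The key step is a case check on the jumps, which shows that $h^0(X,\cL\otimes\cY(n))\in\{R(n),R(n+1)\}$ and also $h^0(X,\cL\otimes\cY(n))\le R(n)$.
\begin{itemize}
\item If $g(n+1)=g(n)+1$, the indicator vanishes and the value is $R(n)$.
\item If $g(n+1)=g(n)$ and $f(n+1)=f(n)$, the indicator is $1$ and the value is again $R(n)$.
\item If $g(n+1)=g(n)$ and $f(n+1)=f(n)-1$, the value is $R(n)-1=f(n+1)+g(n+1)=R(n+1)$, which is at most $R(n)$.
\end{itemize}

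Both minima exist because all the values lie in $\NN$. The first property gives $\min_n h^0(X,\cL\otimes\cY(n)) \geq \min_n R(n)$, since each left-hand value is some $R(m)$. The second gives the reverse inequality. So the two minima agree.

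The only real subtlety is keeping the index shift straight. The base-point condition for $\cL_2(np)$ compares $h^0(\cL_2(np))$ with $h^0(\cL_2((n-1)p))$, and this is exactly why the right-hand side uses $(n-1)p$.
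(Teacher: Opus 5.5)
Your proof is correct and is essentially the paper's argument. The paper uses the same exact sequence and base-point indicator to prove $h^0(X,\cL) = \min\{h^0(X_1,\cL_1)+h^0(X_2,\cL_2(-p)),\ h^0(X_1,\cL_1(-p))+h^0(X_2,\cL_2)\}$; applied to $\cL\otimes\cY(n)$, this says $h^0(X,\cL\otimes\cY(n))=\min\{R(n),R(n+1)\}$, which is exactly what your three-case check establishes before you minimize over $n$.
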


\begin{proof}
This follows from the claim: for any line bundle $\cL$ on $X$,
$$ h^0(X, \cL) = \min \Big\{ h^0(X_1, \cL_1) + h^0(X_2, \cL_2(-p)), h^0(X_1, \cL_1(-p)) + h^0(X_2, \cL_2) \Big\}.$$
To prove this claim, consider the exact sequence $0 \to \cL \to \cL_1 \oplus \cL_2 \to \cL \mid_p \to 0$ of sheaves on $X$. The last term is of course isomorphic to $\cO_p$. Taking global sections, it follows that 
\[ h^0(X,\cL) = h^0(X_1, \cL_1) + h^0(X_2, \cL_2) - \delta,\]
 where $\delta$ is $0$ if both $\cL_1, \cL_2$ have a base point at $p$, and $1$ otherwise. In other words, 
 \[ \delta = \max \Big\{ h^0(X_1, \cL_1) - h^0(X_1, \cL_1(-p)), h^0(X_2, \cL_2) - h^0(X_2, \cL_2(-p)) \Big\};\]
  the claim and the lemma follow.
\end{proof}

By induction on $\ell$, we can reduce $s^{p_1, q_\ell}_\cL$ to transmission functions of the components as follows.

\begin{cor}
\label{cor:chainS}
Let $\cL$ be a line bundle on the chain $(X,p_1, q_\ell)$, and denote by $\cL_i$ the restriction to $C_i$. For all $n_0,n_{\ell} \in \ZZ$,
\[\ds s^{p_1, q_\ell}_\cL(n_0,n_{\ell}) = \min_{n_1, \cdots, n_{\ell-1} \in \ZZ} \left\{ \sum_{i=1}^\ell s^{p_i, q_i}_{\cL_i} (n_{i-1}, n_i) \right\}.
\]
\end{cor}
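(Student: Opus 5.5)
The plan is induction on $\ell$, using the preceding lemma at the node $q_1 = p_2$ (or at $q_{\ell-1}=p_\ell$). For $\ell = 1$ there are no minimization variables, and the statement is just the definition $s^{p_1,q_1}_{\cL}(a,b) = h^0(C_1, \cL((a-1)p_1 - bq_1))$. For $\ell \ge 2$, write $X = X_1 \cup X_2$, where $X_1 = C_1 \cup \cdots \cup C_{\ell-1}$ and $X_2 = C_\ell$, glued at the node $q_{\ell-1} = p_\ell$. Fix $n_0 = a$ and $n_\ell = b$. In the definition of $s^{p_1,q_\ell}_\cL(a,b)$, split the minimum over $\vec{n} \in \ZZ^{\ell-1}$ into an outer minimum over $n_{\ell-1}$ and an inner minimum over $n_1,\dots,n_{\ell-2}$.

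The key step is to apply the lemma to the line bundle
\[ \cM = \cL((a-1)p_1 - bq_\ell) \otimes \bigotimes_{i<\ell-1} \cY_i^{\otimes n_i}. \]
The restrictions of $\cY_{\ell-1}^{\otimes n}$ to $X_1$ and $X_2$ are $\cO(-nq_{\ell-1})$ and $\cO(np_\ell)$, so $\cY_{\ell-1}^{\otimes n}$ plays the role of $\cY(n)$ in the lemma. The lemma therefore gives
\[ \min_{n} h^0(X, \cM \otimes \cY_{\ell-1}^{\otimes n}) = \min_n \Big\{ h^0\big(X_1, \cM|_{X_1}(-nq_{\ell-1})\big) + h^0\big(C_\ell, \cL_\ell((n-1)p_\ell - bq_\ell)\big) \Big\}. \]
The second summand is exactly $s^{p_\ell,q_\ell}_{\cL_\ell}(n, b)$; this is the source of the shift by $-1$. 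The first summand is $h^0$ on the shorter chain $X_1$ of the bundle $\cL|_{X_1}((a-1)p_1 - nq_{\ell-1}) \otimes \bigotimes_{i<\ell-1}\cY_i^{\otimes n_i}$, since the $\cY_i$ with $i < \ell-1$ restrict on $X_1$ to the analogous bundles of the subchain and are trivial on $C_\ell$.

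Next, take the minimum over $n_1,\dots,n_{\ell-2}$ and interchange the two minima. This is legitimate because only the first summand depends on $n_1,\dots,n_{\ell-2}$. The inner minimum is then $s^{p_1,q_{\ell-1}}_{\cL|_{X_1}}(a, n)$ by Definition~\ref{defn:tauChain}, and by the inductive hypothesis it equals $\min_{n_1,\dots,n_{\ell-2}} \sum_{i=1}^{\ell-1} s^{p_i,q_i}_{\cL_i}(n_{i-1},n_i)$ with $n_{\ell-1} = n$. Adding the last summand and minimizing over $n = n_{\ell-1}$ gives the claimed formula.

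The main obstacle is the bookkeeping that lets the lemma apply. The lemma minimizes over $n$ for a fixed bundle, but here we must minimize jointly over all twists. I would handle this by fixing $n_1,\dots,n_{\ell-2}$ first, applying the lemma in $n_{\ell-1}$ alone, and only then minimizing over the rest, so that no interchange involves both summands. I would also check carefully that the twists $-nq_{\ell-1}$ and $(n-1)p_\ell$ match the $(a-1)$ convention, so that the subchain term is exactly $s^{p_1,q_{\ell-1}}(a,n)$ with no further offset.
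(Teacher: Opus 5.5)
Your proposal is correct and follows the same route as the paper, which also proves this by induction on $\ell$ using the preceding node-separation lemma. You apply the lemma at the node $q_{\ell-1}=p_\ell$ in the single variable $n_{\ell-1}$ with the other twists held fixed, then minimize over the rest. Your bookkeeping of the restrictions of the twisting bundles and of the $(a-1)$ and $(n-1)$ offsets is accurate, so the inner minimum is exactly the subchain's transmission function at $(a,n)$.
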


In light of \Cref{lem:tauExists} and the existence and definition of the Demazure product (in this case, an iterated Demazure product), this shows that chains don't just have transmission \emph{functions}; they too have transmission \emph{permutations}, and attachment at a node corresponds to the Demazure product.

\begin{cor}
\label{cor:chainT}
For any degree-$d$ line bundle $\cL$ on the chain $(X,p_1, q_\ell)$, there exists a permutation $\tau = \tau^{p_1, q_\ell}_\cL$ of shift $\chi_\tau = d-g$ such that $s^{p_1, q_\ell}_\cL = s_\tau$. This permutation is given by
$$\tau^{p_1, q_\ell}_\cL = \tpql[1] \star \tpql[2] \star \cdots \star \tpql[\ell].$$ 
\end{cor}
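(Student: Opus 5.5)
The plan is to combine \Cref{cor:chainS}, \Cref{lem:tauExists}, and the min-plus characterization \eqref{eq:demazureDefn} of the Demazure product, inducting on the number $\ell$ of components. For each $i$, \Cref{lem:tauExists} gives a permutation $\tau_i = \tpql[i] \in \asp$ with $s^{p_i,q_i}_{\cL_i} = s_{\tau_i}$ and $\chi_{\tau_i} = d_i - g_i$. Here $d_i = \deg \cL_i$ and $g_i$ is the genus of $C_i$, by \eqref{eq:shiftTau}. Substituting these into \Cref{cor:chainS} gives
\[
s^{p_1,q_\ell}_\cL(n_0,n_\ell) = \min_{n_1,\ldots,n_{\ell-1}} \sum_{i=1}^{\ell} s_{\tau_i}(n_{i-1},n_i).
\]
It remains to show that the right-hand side equals $s_{\tau_1\star\cdots\star\tau_\ell}(n_0,n_\ell)$.

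I would prove the identity
\[
s_{\tau_1 \star \cdots \star \tau_j}(a,b) = \min_{n_1,\ldots,n_{j-1}} \left\{ s_{\tau_1}(a,n_1) + \cdots + s_{\tau_j}(n_{j-1},b) \right\}
\]
by induction on $j$. The case $j=1$ is trivial. For the inductive step, set $\alpha = \tau_1\star\cdots\star\tau_{j-1}$, which lies in $\asp$ since the Demazure product is an associative operation on $\asp$. Then \eqref{eq:demazureDefn} gives $s_{\alpha\star\tau_j}(a,b) = \min_{m}\{s_\alpha(a,m)+s_{\tau_j}(m,b)\}$. Expanding $s_\alpha(a,m)$ by the inductive hypothesis and combining the two minima (which is legitimate since a min of a min over independent variables is the joint min) yields the formula for $j$.

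Taking $j=\ell$ shows $s^{p_1,q_\ell}_\cL = s_\tau$ with $\tau = \tau_1\star\cdots\star\tau_\ell$. A permutation in $\asp$ is determined by its slipface function, by the uniqueness in \Cref{cor:alphaExists}, so $\tau$ is uniquely determined. The shift follows from \eqref{eq:shiftHom}: $\chi_\tau = \sum_i (d_i - g_i)$. Since the chain has $\ell-1$ separating nodes, its genus is $g = \sum_i g_i$, so $\chi_\tau = d - g$, as claimed.

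The only delicate point is the bookkeeping that aligns \Cref{cor:chainS} with \Cref{lem:tauExists}. The ``$-1$'' shifts in the definitions of $s^{p,q}_\cL$ and $s^{p_1,q_\ell}_\cL$ are designed to match, so I expect the main care to go into confirming that the indices $(n_{i-1},n_i)$ in \Cref{cor:chainS} are exactly the arguments of $s^{p_i,q_i}_{\cL_i}$. Once that is confirmed, the rest is formal. If the correspondence were off by a shift, I would absorb it with the twist identity of \Cref{rem:shiftTwist}, $s_{\iota_n\alpha}(a,b)=s_\alpha(a+n,b)$.
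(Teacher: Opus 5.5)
Your proposal is correct and matches the paper's argument: the paper also feeds \Cref{lem:tauExists} into the min-plus formula of \Cref{cor:chainS} and, via the iterated form of \eqref{eq:demazureDefn}, identifies the result as the slipface function of $\tpql[1] \star \cdots \star \tpql[\ell]$, with the shift coming from \eqref{eq:shiftHom} and $g = \sum g_i$. The indices $(n_{i-1},n_i)$ in \Cref{cor:chainS} already match the arguments in \eqref{eq:demazureDefn} exactly, so your fallback to \Cref{rem:shiftTwist} is unnecessary.
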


\begin{prop}
\label{prop:decompW}
Let $\tau \in \ts{k}$, let $(X,p_1, q_\ell)$ be a twice-marked chain as above. Let $d = \chi_\tau+g$, where $g$ is the genus of $X$, and let $\vec{d} \in \ZZ^{\ell}$ sum to $d$. Let $W$ be the set of \emph{reduced} tuples $(\alpha_1, \cdots, \alpha_\ell)$ in $\ts{k}$ that satisfy $\chi_{\alpha_i} = d_i - g_i$ for all $i$, where $g_i$ is the genus of $C_i$, and such that $\alpha_1 \alpha_2 \cdots \alpha_\ell = \tau$. Identify $\Pic^{\vec{d}}(X)$ with $\Pic^{d_1}(C_1) \times \Pic^{d_2}(C_2) \times \cdots \times \Pic^{d_\ell}(C_\ell)$. Then
$$W^\tau_{\vec{d}}(X,p_1,q_\ell) = \bigcup_W \prod_{i=1}^{\ell} W^{\alpha_i}(C_i, p_i, q_i).$$
\end{prop}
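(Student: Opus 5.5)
The plan is to reduce membership in $W^\tau_{\vec{d}}$ to a Bruhat comparison between the chain's transmission permutation and $\tau$, and then match the two sides using \Cref{cor:chainT} and \Cref{thm:reduce}.

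\emph{Reformulation.} Fix $[\cL] \in \Pic^{\vec{d}}(X)$ with restrictions $\cL_i$, and write $\tau_i = \tpql[i]$. By \Cref{cor:chainT}, $s^{p_1,q_\ell}_\cL = s_{\tau_1 \star \cdots \star \tau_\ell}$. Hence $[\cL] \in W^\tau_{\vec{d}}(X,p_1,q_\ell)$ if and only if
\[ \tau_1 \star \cdots \star \tau_\ell \geq \tau \]
in Bruhat order. Similarly, $[\cL_i] \in W^{\alpha_i}(C_i,p_i,q_i)$ if and only if $\tau_i \geq \alpha_i$. Both comparisons are between permutations of the same shift: $\chi_{\tau_i} = d_i - g_i$ by Equation~\eqref{eq:shiftTau}, and $\sum (d_i - g_i) = d - g = \chi_\tau$ because the chain has genus $\sum g_i$. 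Equation~\eqref{eq:shiftHom} then gives $\chi_{\tau_1 \star \cdots \star \tau_\ell} = \chi_\tau$.

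\emph{Inclusion $\supseteq$.} Suppose $(\alpha_1,\dots,\alpha_\ell) \in W$ and $\tau_i \geq \alpha_i$ for each $i$. I need monotonicity of $\star$ in each argument. This is immediate from the min-plus formula \eqref{eq:demazureDefn}: increasing each $s_{\tau_i}$ pointwise increases the iterated minimum. Equivalently, it follows from the max characterization \eqref{eq:starMax}. Therefore
\[ \tau_1 \star \cdots \star \tau_\ell \geq \alpha_1 \star \cdots \star \alpha_\ell. \]
It remains to show that for a reduced tuple, $\alpha_1 \star \cdots \star \alpha_\ell = \alpha_1 \cdots \alpha_\ell = \tau$. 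I would argue this by induction on $\ell$, using the stated fact that $\alpha \star \beta = \alpha\beta$ when $\alpha$ and $\beta^{-1}$ share no inversions. The reducedness hypothesis should give exactly this disjointness at each step, namely between $\alpha_1 \cdots \alpha_{\ell-1}$ and $\alpha_\ell$ after conjugating the inversion sets appropriately. I expect this bookkeeping to be the main point to check carefully. If it becomes messy, I would instead invoke the content of \cite[Theorem 6.5]{pflDemazure} and \cite[Lemma 6.2]{pflDemazure} that reduced tuples realize the Demazure product.

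\emph{Inclusion $\subseteq$.} Suppose $\tau_1 \star \cdots \star \tau_\ell \geq \tau$. First I must place each $\tau_i$ in $\ts{k}$. If $k \geq 2$, the components are assumed $k$-torsion in the relevant applications. More precisely, I would either restrict to chains whose components have all transmission permutations in $\ts{k}$, or note that \Cref{thm:reduce} only needs $\gamma \in \ts{k}$ if it is applied in $\asp = \ts{0}$. I would handle this by applying the theorem in the smallest group that contains all the data, and then checking that the resulting $\beta_i$ lie in $\ts{k}$. This membership question is the real obstacle; I would try to resolve it by using that the reduction is compatible with $\ts{k}$, as noted in \cite[\S 8.4]{pflDemazure}. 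Once it is settled, \Cref{thm:reduce} with $\gamma = \tau$ produces a reduced tuple $(\beta_1,\dots,\beta_\ell)$ in $\ts{k}$ with $\chi_{\beta_i} = \chi_{\tau_i} = d_i - g_i$, $\beta_i \leq \tau_i$ and $\beta_1 \cdots \beta_\ell = \tau$. Thus $(\beta_i) \in W$ and $[\cL_i] \in W^{\beta_i}(C_i,p_i,q_i)$ for every $i$, which proves the inclusion.
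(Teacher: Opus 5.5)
Your argument is the paper's: by \Cref{cor:chainT}, membership in $W^\tau_{\vec d}$ becomes the Bruhat comparison $\tau_1\star\cdots\star\tau_\ell\ge\tau$, and \Cref{thm:reduce} turns this into a reduced factorization. For $\supseteq$ you are more explicit than the paper, which folds that direction into ``if and only if.'' The bookkeeping you were worried about does go through:
\begin{itemize}
\item reducedness of $(\alpha_1,\dots,\alpha_\ell)$ passes to $(\alpha_1,\dots,\alpha_{\ell-1})$ and to the pair $(\alpha_1\cdots\alpha_{\ell-1},\alpha_\ell)$;
\item for a reduced pair $(\alpha,\beta)$, a common inversion $(u,v)$ of $\alpha$ and $\beta^{-1}$ would be transported to $(\beta^{-1}(u),\beta^{-1}(v))$. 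Its first entry is larger, so it cannot lie in $\Inv(\alpha\beta)$. Hence Lemma~5.1 of the cited paper applies inductively.
\end{itemize}

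For $\subseteq$, take your first option. The paper's proof implicitly assumes it when it feeds the $\tau_i$ into \Cref{thm:reduce}, which needs $\tau_i\in\ts{k}$, i.e.\ $k$-torsion components. This is automatic for $k=0$, and it is exactly the setting in which \Cref{thm:kGTChain} uses the proposition.

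Your second option (reduce in $\asp$ and check that the $\beta_i$ land in $\ts{k}$) cannot succeed, because without that hypothesis the statement is false for $k\ge 2$. Here is a counterexample.
\begin{itemize}
\item Take $k=2$, $\tau=\sigma^2_0$, $X$ a chain of two genus-$1$ curves each of torsion order $4$, and $\vec d=(1,1)$.
\item By \Cref{lem:genus1}, the bundle with $\cL_1\cong\cO(q_1)$ and $\cL_2\cong\cO(3q_2-2p_2)$ has $\tau_1=\sigma^4_0$ and $\tau_2=\sigma^4_2$.
\item $\tau_1$ and $\tau_2^{-1}=\tau_2$ share no inversions, so $\tau_1\star\tau_2=\tau_1\tau_2=\sigma^2_0$. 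Thus $[\cL]$ lies in $W^\tau_{\vec d}(X,p_1,q_2)$.
\item Now let $\alpha\in\ts{2}$ have shift $0$ and satisfy $\alpha\le\sigma^4_m$. Then $s_\alpha$ can exceed $\max(a-b,0)$ only at points $(c,c)$ with $c\equiv m+1 \pmod 4$. But $s_\alpha(c,c)=s_\alpha(c+2,c+2)$ by $2$-periodicity, so $\alpha$ is the identity.
\item Hence the only candidate $(\alpha_1,\alpha_2)$ with $\alpha_i\le\tau_i$ is $(\iota_0,\iota_0)$, whose product is not $\tau$. So $[\cL]$ is not in the right-hand side.
\end{itemize}

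Taking $W$ to be reduced tuples in $\asp$ would make the statement true for arbitrary chains. That version, however, does not give the codimension count modulo $k$-equivalence that \Cref{thm:kGTChain} needs. So state the $k$-torsion hypothesis explicitly; with it, your proof is complete and agrees with the paper's.
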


\begin{proof}
A bundle $[\cL] \in \Pic^{\vec{d}}(X)$ lies in $W_{\vec{d}}^\tau(X,p_1,q_\ell)$ if and only if $ \tpql[1] \star \tpql[2] \star \cdots \star \tpql[\ell] \geq \tau$ in Bruhat order. 
By assumption on $\cL$ and \Cref{eq:shiftTau}, the shift $\chi_i$ of $\tpql[i]$ is $\deg \cL|_{C_i} - g_i$, so $\chi_i = d_i - g_i$ for all $i$.
By \Cref{thm:reduce}, this occurs if and only if there exists a reduced product $\alpha_1 \cdots \alpha_\ell = \tau$ with $\chi_{\alpha_i} = \chi_i$ and $\alpha_i \leq \tpql[i]$ for all $i$. These inequalities are equivalent to $[\cL] \in \prod_{i=1}^{\ell} W^{\alpha_i}(C_i, p_i, q_i)$. So $W^\tau_{\vec{d}}(X,p_1,q_\ell)$ is equal to the union of all such products for $\alpha_1, \cdots, \alpha_\ell$ chosen from the set $W$.
\end{proof}

\begin{eg}
\label{eg:reducedWords}
Suppose that $\inv_k(\tau) = g$, and $X$ is a chain of $k$-torsion twice-marked genus $1$ curves.
Assume also that $\chi_\tau = g$.
It will follow from the analysis in \Cref{sec:genus1} that every $(\alpha_1, \cdots, \alpha_\ell) \in W$ has $\inv_k(\alpha_n) = 1$ for all $n$. If we choose $\vec{d} = (1,\cdots,1)$, we have $\chi_i = 0$ for all $i$, and $W$
is the set of \emph{reduced words} for $\tau$ in the affine symmetric group. Therefore we obtain a bijection between the points of $W^\tau(X,p_1,q_\ell)$ and reduced words. 
\end{eg}

\begin{thm}
\label{thm:kGTChain}
A twice-marked chain of $k$-torsion curves has $k$-general transmission if and only if each curve in the chain has $k$-general transmission. 
\end{thm}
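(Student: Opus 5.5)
The plan is to use \Cref{prop:decompW} to reduce everything to products of transmission loci on the components, and \Cref{thm:reduce} to control how inversions distribute over a reduced factorization. Write $(X,p_1,q_\ell)$ for the chain, with components $(C_i,p_i,q_i)$ of genus $g_i$, so $g=\sum g_i$. By Remark~\ref{rem:shiftTwist} and the comment after \Cref{def:kGTChain}, I may work with $W^\tau_{\vec d}$ for any convenient $\vec d$, since changing $\vec d$ only twists by some $\cY(\vec n)$ and does not change dimensions.

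\textbf{Membership in $\ts{k}$.} First I show that every transmission permutation on $X$ lies in $\ts{k}$ if and only if the same holds on every component. If each $\tpql[i]\in\ts{k}$, then $\tau^{p_1,q_\ell}_\cL=\tpql[1]\star\cdots\star\tpql[\ell]\in\ts{k}$ by \Cref{cor:chainT}, because $\ts{k}$ is closed under $\star$. For the converse I use the hypothesis that each $C_i$ is $k$-torsion: the definition of $\tpql$ then gives $\tpql[i]\in\ts{k}$ for every $\cL_i$, so condition (1) always holds on both sides and only the dimension condition (2) carries content.

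\textbf{Components general implies chain general.} Fix $\tau\in\ts{k}$ and $\vec d$. By \Cref{prop:decompW}, $W^\tau_{\vec d}(X)$ is a finite or countable union of the products $\prod_i W^{\alpha_i}(C_i,p_i,q_i)$ over reduced tuples with $\prod\alpha_i=\tau$. Countability is automatic, since the $\alpha_i$ lie in $\ts{k}$ and have bounded inversion counts. By \Cref{thm:reduce}, a reduced tuple satisfies $\sum_i\inv_k(\alpha_i)=\inv_k(\tau)$. Each factor has codimension at least $\inv_k(\alpha_i)$ in $\Pic^{d_i}(C_i)$, so each product has codimension at least $\inv_k(\tau)$ in $\Pic^{\vec d}(X)$. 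Each product is closed, because the loci $W^{\alpha}(C_i)$ are closed by semicontinuity of $h^0$. Hence every irreducible component of $W^\tau_{\vec d}(X)$ lies in one of the products, and therefore has codimension at least $\inv_k(\tau)$. When the union is infinite, a component is an irreducible closed set covered by countably many closed subsets. Over an uncountable field it then lies in one of them. Over a countable field I would instead argue that only finitely many tuples give nonempty products: if $\inv_k(\alpha_i)>g_i$, genericity forces $W^{\alpha_i}(C_i)$ to be empty, so only tuples with bounded inversions, and hence finitely many, contribute.

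\textbf{Chain general implies components general.} Suppose instead that some $C_j$ fails, so there are $\alpha\in\ts{k}$ and a component $Z\subseteq W^\alpha(C_j)$ of codimension less than $\inv_k(\alpha)$. For $i\ne j$ I choose the identity-like permutations $\iota_{d_i-g_i}$; these have no inversions and their loci are all of $\Pic^{d_i}(C_i)$. I then need $\tau=\iota_{c}\,\alpha\,\iota_{c'}$, where $c=\sum_{i<j}\chi_i$ and $c'=\sum_{i>j}\chi_i$, to be a reduced product. Composing with shifts does not change inversion counts, so the tuple is reduced and $\inv_k(\tau)=\inv_k(\alpha)$. The locus $Z\times\prod_{i\ne j}\Pic^{d_i}(C_i)$ lies in $W^\tau_{\vec d}(X)$ by \Cref{prop:decompW}, and it has codimension less than $\inv_k(\tau)$. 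It remains to show that some component of $W^\tau_{\vec d}(X)$ containing it is no smaller.

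\textbf{Main obstacle.} This last step is where I expect the difficulty. Other reduced factorizations of $\tau$ contribute further products to the union, and I must rule out that $Z\times\prod_{i\ne j}\Pic^{d_i}(C_i)$ sits properly inside a larger irreducible piece; the danger is that it is contained in a product that is not itself a component. That would not reduce dimension, though: any component containing this product has dimension at least that of the product. So the chain has a component of codimension less than $\inv_k(\tau)$ and is not $k$-general, and the step resolves itself. The one point to check carefully is that $\iota_{c}\alpha\iota_{c'}$, with the shifts $\iota$ placed on the other components, really is reduced. This should follow directly from the definition, since the $\iota$ factors contribute empty inversion sets.
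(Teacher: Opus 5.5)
Your proposal is correct and follows the paper's proof. The forward direction is \Cref{prop:decompW} together with additivity of $\inv_k$ over reduced tuples. The converse places the offending permutation in one slot with inversion-free factors elsewhere. There are two small differences: the paper normalizes to shift $0$ with $\vec d=(g_1,\dots,g_\ell)$ and then appeals to \Cref{rem:shiftTwist}, and it deduces $kp_n\sim kq_n$ for every component from $s_{\tau^{p_1,q_\ell}_{\cO_X}}(k+1,k)\geq 1$ via the min-plus formula, rather than taking it from the hypothesis.

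The one slip is in your countable-field aside when $k=0$. Bounded inversion counts do not give finitely many permutations, since each $\sigma^0_m$ has one inversion. The set of reduced tuples with product $\tau$ is still finite, because each transported $\Inv(\alpha_i)$ lies in the finite set $\Inv(\tau)$ and an element of $\asp$ is determined by its shift and inversion set.
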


\begin{proof}
Suppose each $(C_i, p_i, q_i)$ has $k$-general transmission. Then for every choice of line bundles $\cL_i$ on $C_i$, $\tpql[i] \in \ts{k}$ for all $i$, so \Cref{cor:chainT} implies that $\tau^{p_1, q_\ell}_\cL \in \ts{k}$ as well, since $\ts{k}$ is closed under $\star$. 
We now consider the dimension bound. For every $\tau \in \ts{k}$, every element of $W$ has $\inv_k(\tau) = \sum_{i=1}^\ell \inv_k(\alpha_i)$, so every element of the union in that proposition has codimension at least $\inv_k(\tau)$. It follows that $(X,p_1, q_\ell)$ has $k$-general transmission. 

Conversely, suppose that the chain $(X,p_1, q_\ell)$ has $k$-general transmission. 
To tame an illegible nest of subscripts in our notation, define $s_n = s_{\tau^{p_n,q_n}_{\cO_{C_n}}}$ for $n=1,\cdots,\ell$.
We claim that each twice-marked curve $(C_n, p_n, q_n)$ has $k$-torsion, or equivalently that $s_n(k+1,k) \geq 1$. To see this, note that $\tau^{p_1, q_\ell}_{\cO_X}$ is the Demazure product of all $\tau^{p_i,q_i}_{\cO_{C_i}}$. Since the chain as a whole has $k$-general transmission, we have $s_1 \star \cdots \star s_\ell(k+1,k) \geq 1$.
It follows that for each $n$,
$$\left[ \sum_{i=1}^{n-1} s_i(k+1,k+1) \right] + s_n(k+1,k) + \left[ \sum_{i=n+1}^\ell s_i(k,k) \right] \geq 1.$$
For all $i$, $s_i(k+1,k+1) = s_i(k,k) = 0$, since these are dimensions of spaces of sections of negative-degree line bundles. 
It follows that $s_n(k+1,k) = h^0(C_n, \cO_{C_n}(kp_n - kq_n)) \geq 1$, which proves the claim. Therefore all transmission permutations on all components $(C_n, p_n, q_n)$ lie in $\ts{k}$. 

We must now verify the codimension bound on each component. Let $\tau$ be any shift-$0$ permutation, and let $1 \leq n \leq \ell$. 
Identify $W^\tau(X,p_1, q_\ell)$ with $W^\tau_{\vec{d}}(X,p_1, q_\ell)$ with $\vec{d} = (g_1, \cdots, g_\ell)$, where $g_i$ is the genus of $C_i$.
Decompose $W^\tau_{\vec{d}}(X,p_1, q_\ell)$ as in \Cref{prop:decompW}. Note that in that decomposition, each shift $\chi_i$ is $d_i - g_i = 0$. Define a tuple $(\alpha_1, \cdots, \alpha_\ell)$ by $\alpha_n = \tau$ and $\alpha_i = \iota_0$ (the identity) for $i \neq n$. These all have shift $0$. This tuple is an element of the set $W$, and $W^{\alpha_i}(C_i,p_i,q_i)$ is all of $\Pic^{g_i}(C_i)$ if $i \neq n$, and $W^\tau(C_n, p_n,q_n)$ when $i=n$. So $\prod_{i=1}^\ell W^{\alpha_i}(C_i,p_i,q_i)$ has the same codimension in $\Pic^{\vec{d}}(X)$ as $W^\tau(C_n, p_n, q_n)$ does in $\Pic^{g_n}(C_n)$, which is therefore at least $\inv_k(\tau)$. By \Cref{rem:shiftTwist}, this implies that the same holds for all $\tau$ of $\emph{any}$ shift. So all $(C_n, p_n, q_n)$ have $k$-general transmission.
\end{proof}

\section{The genus \texorpdfstring{$1$}{1} case}
\label{sec:genus1}

The story of transmission permutations is particularly simple in genus $1$, which makes it an excellent base case. This section proves

\begin{thm}
\label{thm:genus1GT}
A genus-$1$ twice-marked curve $(E,p,q)$ has $k$-general transmission if and only if it has torsion order $k$. 
\end{thm}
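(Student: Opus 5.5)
The plan is to compute every transmission permutation on a genus-$1$ curve explicitly, and then read off both conditions in the definition of $k$-general transmission by direct inspection.

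\emph{Computing the transmission permutations.} Let $(E,p,q)$ have genus $1$, let $\cL$ have degree $d$, and set $\chi=d-1$. The line bundle $\cL((a-1)p-bq)$ has degree $\chi+a-b$. By Riemann--Roch on $E$, its $h^0$ equals $\max(0,\chi+a-b)$, except when its degree is $0$ and it is trivial, in which case $h^0=1$. Hence
\[ \spql(a,b)=s_{\iota_\chi}(a,b)+\delta\big(\cL\cong\cO_E(bq-(a-1)p),\ a-b=-\chi\big). \]
Substituting $a=b-\chi$, the exceptional pairs are indexed by the set
\[ B_\cL=\{b\in\ZZ:\ \cL\cong\cO_E(bq-(b-\chi-1)p)\}. \]
Since $\cO_E(b'q-(b'-\chi-1)p)\cong \cO_E(bq-(b-\chi-1)p)\otimes\cO_E((b'-b)(q-p))$, the set $B_\cL$ is either empty or a coset of $k'\ZZ$, where $k'$ is the torsion order of $(E,p,q)$.

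Then I check, using $\sa(a,b)=\#\{n\ge b:\alpha(n)<a\}$, the following description of the permutation:
\[ \tpql=\iota_\chi\prod_{b\in B_\cL}(b-1\ \ b), \]
a product of commuting adjacent transpositions. The indexing of the transpositions must be fixed by this direct computation; the point is that adding $1$ to the slipface function on one antidiagonal point corresponds to one adjacent transposition. Consequently:
\begin{enumerate}
\item if $B_\cL=\emptyset$ (the generic case), the permutation is $\iota_\chi$;
\item if $k'=0$ and $B_\cL\neq\emptyset$, it has exactly one inversion;
\item if $k'\ge 2$ and $B_\cL\neq\emptyset$, it lies in $\ts{k'}$ and has exactly one $k'$-class of inversions.
\end{enumerate}
In cases (2) and (3), $B_\cL\neq\emptyset$ forces $\cL$ to be a single point of $\Pic^d(E)$.

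\emph{The ``if'' direction.} Suppose the torsion order is $k$. Condition (1) holds by the computation above. For condition (2), fix $\tau\in\ts{k}$; since $\dim\Pic^d(E)=1$, only $\inv_k(\tau)\ge1$ matters. The key lemma is that, among permutations of a fixed shift, the only $\tau\le\tpql$ are $\iota_\chi$ and $\tpql$ itself. I would prove this from slipface functions. Any $s_\tau$ satisfies $s_\tau\ge s_{\iota_\chi}$ (from the asymptotics in \Cref{cor:alphaExists} together with submodularity). Moreover, $s_{\tpql}-s_{\iota_\chi}$ is the indicator of a set of antidiagonal points spaced $k$ apart. A submodular function squeezed between the two therefore has a difference supported on a $k$-periodic subset of those points, and $k$-periodicity of $\tau$ forces that subset to be all or nothing. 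With the lemma in hand the conclusion is quick. If $\inv_k(\tau)=1$ and $\tau\neq\iota_\chi$, then $W^\tau$ is contained in the finitely many special $\cL$, so it has codimension at least $1$. If $\inv_k(\tau)\ge2$, then no $\tpql$ dominates $\tau$, so $W^\tau=\emptyset$. This lemma is the step I expect to be the main obstacle; in particular, the $k$-periodicity of $\tau$ must be used carefully.

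\emph{The ``only if'' direction.} Suppose the torsion order is $k'\neq k$. Choose $\cL=\cO_E(q)$, which is special in the sense above, so that $B_\cL\neq\emptyset$, and let $\tau=\tpql$. I split into cases.
\begin{enumerate}
\item If $k'=0$ and $k\ge2$: $\tau$ has a single inversion, so it is not $k$-periodic and condition (1) fails.
\item If $k'\ge2$ and $k'\nmid k$ (this includes the case $k=0$ for (1) only when $k=0$; see the next item): $\tau\notin\ts{k}$, so condition (1) fails.
\item If $k=0$ and $k'\ge2$: $\tau$ has infinitely many inversions, so $\inv_0(\tau)=\infty$, yet $W^\tau\ni[\cL]$ is nonempty; condition (2) fails.
\item If $k'\mid k$ with $k'<k$: $\tau\in\ts{k}$, but its inversions split into $k/k'\ge2$ classes of $k$-equivalence. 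So $\inv_k(\tau)\ge2$, while $W^\tau$ is a nonempty point of codimension $1$; condition (2) fails.
\end{enumerate}
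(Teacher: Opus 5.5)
Your proposal is correct and follows essentially the paper's route. You compute $\tau^{p,q}_\cL$ explicitly as $\iota_{d-1}$ or $\iota_{d-1}\sigma^k_{m-1}$ (Lemma~\ref{lem:genus1}), conclude that each $W^\tau$ is all of $\Pic^d(E)$, a single point, or empty, and for the converse use $\cL=\cO_E(q)$, whose permutation is $\sigma^{k'}_0$. Your explicit Bruhat-interval lemma makes precise a step the paper's proof states tersely. Your separate treatment of target $k=0$ with positive torsion order (there $\sigma^{k'}_0$ has infinitely many inversions yet $W^{\sigma^{k'}_0}$ is nonempty) is, if anything, more careful than the paper's one-line case split.

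One small correction: when the torsion order is $0$ there are infinitely many special $\cL$ in each degree. So in the case $\inv_k(\tau)=1$ do not appeal to ``finitely many special $\cL$''. Instead use your own lemma, which gives $W^\tau\subseteq\{[\cL]:\tau^{p,q}_\cL=\tau\}$; this is a single point, since $\tau^{p,q}_\cL$ determines $\cL$.
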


Fix the following notation. For $k=0$ or $k \geq 2$ and any integer $m$, let $\skm$ be
the permutation exchanging $n$ and $n+1$ for all $n \equiv m \pmod{k}$, and fixing all other integers. Define $\iota_m$ as in \Cref{rem:shiftTwist}. It will be useful to have a formula for the following function.
\begin{equation}
\label{eq:sis}
s_{\iota_n \sigma^k_m}(a,b) = \max(a-b+n,0) + \delta \Big[ a+n = b \equiv m+1 \pmod{k} \Big].
\end{equation}

We use the following fact without proof: for any $\alpha \in \ts{k}$, $\inv_k(\alpha) = 0$ if and only if $\alpha = \iota_n$ for some $n$, and $\inv_k(\alpha) = 1$ if and only if $\alpha = \iota_n \skm$ for some $n,m$.

\begin{lemma}
\label{lem:genus1}
Suppose $(E, p,q)$ is a genus $1$ twice-marked curve with torsion order $k$, and let $\cL$ be a degree $d$ line bundle on $E$.
\begin{enumerate}
\item If there exists $m \in \ZZ$ such that $\cL \cong \cO_E(m q + (d-m) p)$, then $\tau^{p,q}_\cL = \iota_{d-1} \sigma_{m-1}^{k}$.
\item If no such $m$ exists, then $\tau^{p,q}_\cL = \iota_{d-1}$.
\end{enumerate}
\end{lemma}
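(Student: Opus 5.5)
The plan is to compute the transmission function $\spql(a,b) = h^0\big(E, \cL((a-1)p - bq)\big)$ directly and compare it with Equation~\eqref{eq:sis}. By \Cref{lem:tauExists}, $\spql = s_{\tau^{p,q}_\cL}$. By the uniqueness clause of \Cref{cor:alphaExists}, a permutation in $\asp$ is determined by its slipface function. So it suffices to check that $\spql$ agrees with $s_{\iota_{d-1}\sigma^k_{m-1}}$ in case (1), and with $s_{\iota_{d-1}}$ in case (2).

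\textbf{Step 1: compute $h^0$ of a twist.} The bundle $\cM = \cL((a-1)p - bq)$ has degree $e = d + a - 1 - b$. On a genus-$1$ curve, Riemann--Roch gives:
\begin{itemize}
\item[(i)] $h^0(\cM) = e$ if $e > 0$;
\item[(ii)] $h^0(\cM) = 0$ if $e < 0$;
\item[(iii)] if $e = 0$, then $h^0(\cM) = 1$ or $0$ according to whether $\cM \cong \cO_E$ or not.
\end{itemize}
Hence
\[ \spql(a,b) = \max(a - b + d - 1, 0) + \delta\big[a + d - 1 = b \text{ and } \cL \cong \cO_E(bq - (a-1)p)\big]. \]
On the locus $a + d - 1 = b$ we have $-(a-1) = d - b$. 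So the bracketed isomorphism condition becomes $\cL \cong \cO_E(bq + (d-b)p)$.

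\textbf{Step 2: case (2).} If no $m$ with $\cL \cong \cO_E(mq + (d-m)p)$ exists, the $\delta$ term always vanishes. Then $\spql(a,b) = \max(a - b + d - 1, 0)$, which is the case $\chi = d-1$ of \eqref{eq:sis} without the correction term. Equivalently, it is $s_{\iota_{d-1}}$, as can be checked directly or via \Cref{rem:shiftTwist} applied to the identity. So $\tau^{p,q}_\cL = \iota_{d-1}$.

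\textbf{Step 3: case (1).} If $\cL \cong \cO_E(mq + (d-m)p)$, then $\cL \cong \cO_E(bq + (d-b)p)$ holds if and only if $\cO_E((b-m)q) \cong \cO_E((b-m)p)$. By definition of torsion order $k$, this holds exactly when $b \equiv m \pmod k$. For $k = 0$ this reads $b = m$, consistent with the convention that congruence mod $0$ is equality. Therefore
\[ \spql(a,b) = \max(a - b + d - 1, 0) + \delta\big[a + d - 1 = b \equiv m \pmod k\big]. \]
This is exactly \eqref{eq:sis} with $n = d - 1$ and $m$ replaced by $m - 1$. Uniqueness then gives $\tau^{p,q}_\cL = \iota_{d-1}\sigma^k_{m-1}$.

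\textbf{Expected obstacle.} There is essentially no obstacle beyond bookkeeping. The one point needing care is the index shifts: the ``$a-1$'' in the definition of $\spql$, together with the $m+1$ in \eqref{eq:sis}, must line up to produce $\sigma^k_{m-1}$. Here I take Equation~\eqref{eq:sis} as given.
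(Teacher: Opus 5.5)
Your proof is correct and follows the paper's argument essentially step for step. You compute $h^0$ of each twist by Riemann--Roch on a genus-$1$ curve, observe that the only correction occurs in degree $0$ exactly when $b \equiv m \pmod{k}$ (because $k$ generates $\{n : np \sim nq\}$), and then match the result against Equation~\eqref{eq:sis}; you also spell out the index shift and the $k=0$ convention, which the paper leaves implicit.
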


\begin{proof}
Riemann--Roch implies that the only special line bundle on a genus $1$ curve is $\cO_E$. Therefore if $\cL$ is not isomorphic to $\cO_E(mq + (d-m)p)$ for any integer $m$, then $s^{p,q}_\cL = s_{\iota_{d-1}}$. This proves part (2). 

Now assume $\cL \cong \cO_E(m q + (d-m) p)$ for some $m$. Then $\cL(ap-bq)$ is special if and only if it has degree $0$ and $b \equiv m \pmod{k}$, and it has $h^0 = h^1 = 1$ in that case. Therefore
$$s^{p,q}_\cL(a,b) = \max \{ d -1 + a - b, 0 \} + \delta \left( b \equiv m\pmod{k} \text{ and } a = b-d+1 \right).$$
By Equation~\eqref{eq:sis}, this is $s_{\iota_{d-1} \sigma^k_{m-1}}(a,b)$.
\end{proof}

\begin{proof}[Proof of \Cref{thm:genus1GT}]
If $(E,p,q)$ is genus $1$ and has torsion order $k$, then \Cref{lem:genus1} shows that $W^{\iota_n}(E,p,q) = \Pic^{n+1}(E)$, $W^{\iota_n \skm}(E,p,q)$ is a single point, and $W^\alpha(E,p,q)$ is empty for any other $\alpha \in \ts{k}$. So $(E,p,q)$ has $k$-general transmission.

On the other hand, for any $k' \neq k$, the fact that $\sigma^k_0$ occurs as a transmission permutation on $(E,p,q)$ implies that $(E,p,q)$ does not have $k'$-general transmission: either $k \nmid k'$, in which case $\sigma^k_0 \not\in \ts{k'}$, or $k \mid k'$, in which case $\inv_{k'}(\sigma^k_0) = k' / k \geq 2$, but $W^{\sigma^k_0}(E,p,q)$ has codimension $1$.
\end{proof}

\section{Relative transmission loci}
\label{sec:relative}

The purpose of this section is to prove the following theorem. 
\begin{thm}
\label{thm:semicontinuityT}
Let $\mathcal{C}_{g,2}$ denote the locus in $\overline{\cM}_{g,2}$ of twice-marked chains, including twice-marked smooth curves. Then $(X,p,q) \mapsto \dim W^\tau(X,p,q)$ is an upper semicontinuous function on $\mathcal{C}_{g,2}$.
\end{thm}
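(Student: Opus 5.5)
The plan is to realize the transmission loci as fibers of a single closed subscheme of a relative Picard scheme over a neighborhood in $\mathcal{C}_{g,2}$, and then apply Chevalley's upper semicontinuity of fiber dimension for proper morphisms. The argument is local. Near a point $(X_0,p,q)$ of $\mathcal{C}_{g,2}$, choose an étale (or versal) base $B$ carrying a family $\pi:\mathcal{X}\to B$ of twice-marked chains with sections $p,q$. After further étale base change, assume the nodes separating components are given by sections along the strata where they persist. Two ingredients then make the constructions work globally over $B$. First, choose a relative Picard component $\mathcal{P}\to B$ parametrizing multidegree $(d,0,\ldots,0)$ on the generic-type fibers; more precisely, use the twisting bundles $\cY_i$, which extend over the boundary divisors as $\cO_{\mathcal{X}}(\text{component})$ twists. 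Second, choose a Poincaré bundle $\mathcal{L}$ on $\mathcal{X}\times_B\mathcal{P}$.

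With these in place, define $\mathcal{W}^\tau\subset\mathcal{P}$ by the conditions
\[h^1\big(\cL(ap-bq)\otimes\cY(\vec n)\big)\geq s_{\tau^{-1}}(b,a+1)\]
for all $a,b,\vec n$. Following \Cref{rem:h1}, these are imposed via Fitting ideals of $R^1\pi_*$ of the twisted Poincaré bundle. Such a Fitting condition cuts out a closed subscheme, since $h^1$ is upper semicontinuous and $R^1$ commutes with base change on curves. The bounding must be by $h^1$ rather than $h^0$; this is the reason for the $h^1$ formulation of $W^\tau$. Since a minimum over $\vec n$ is $\geq$ a bound exactly when every term is, $s_\cL\geq s_\tau$ is an intersection of closed conditions. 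By Equation~\eqref{eq:duality}, the same holds for the $h^1$ form. The fiber of $\mathcal{W}^\tau$ over $b\in B$ is then $W^\tau(X_b,p,q)$, at least set-theoretically, by \Cref{defn:tauChain} and \Cref{def:WtauChain}. On smooth fibers the twists $\cY(\vec n)$ are trivial and we recover the smooth definition.

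Next, reduce to finitely many conditions. Over points $(a,b)$ with $|a-b|$ large, the inequality is automatic by Riemann--Roch, as in \Cref{cor:alphaExists}(1). The $\vec n$ can also be restricted to a bounded box. Away from a bounded range, $\cL\otimes\cY(\vec n)$ has very negative or very positive degree on some component, so its $h^0$ is large or the minimum is attained elsewhere. This is seen via \Cref{cor:chainS}, whose minimum is attained for $n_i$ in a range bounded in terms of $n_0,n_\ell$ and the genera. So $\mathcal{W}^\tau$ is a finite intersection of closed subschemes. Since $\mathcal{P}\to B$ is proper (compact type, so the Picard components are abelian-variety torsors) and $\mathcal{W}^\tau$ is closed, the map $\mathcal{W}^\tau\to B$ is proper. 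Chevalley's theorem then gives upper semicontinuity of $b\mapsto\dim\mathcal{W}^\tau_b$. Empty fibers count as $-\infty$, and properness handles them, since the image is closed.

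I expect the main obstacle to be the uniform extension of the twisting bundles $\cY_i$ across a degeneration in which a smooth curve acquires new nodes. On nearby fibers with fewer nodes, the extra $\cY_i$ must restrict to trivial bundles, and the chosen multidegree component must vary continuously. I would handle this by taking $\cY_i=\cO_{\mathcal{X}}(D_i)$, where $D_i$ is the union of the components on one side of the $i$-th node. Here $D_i$ is a Cartier divisor after an étale base change that makes the total space locally of the form $xy=t_i$; this is a standard twisting construction, as in limit linear series. I would also verify that $\cO(D_i)$ is trivial on fibers where the node is smoothed, and restricts to $\cY_i$ on the special fiber, possibly up to the multidegree normalization, which \Cref{def:kGTChain}'s remark shows is harmless.
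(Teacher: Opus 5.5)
Your proposal follows the paper's proof: work locally on a versal deformation, twist by the divisors lying over the loci where nodes persist (your $D_i$, the paper's $Y_i$), cut out a relative transmission locus in $\Pic^{d,p}(\pi)$ by Fitting ideals of $R^1\rho_\ast$, identify its fibers with $W^\tau(X_t,p_t,q_t)$ by cohomology and base change, and conclude by semicontinuity of fiber dimension. Your properness remark makes explicit something the paper leaves implicit. The one step to drop is the reduction to finitely many conditions: it is unnecessary, because any intersection of closed subschemes of a Noetherian scheme is a closed subscheme. It also does not work as written, since the band $|a-b|\le M$ still contains infinitely many pairs $(a,b)$.
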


To do so, we formulate a relative version of transmission loci, defined for versal deformations.
For simplicity, we have not attempted to define relative transmission loci for more general families, and instead freely make assumptions that will simplify the exposition and be sufficient for our purposes. We freely use various standard facts about deformation theory of nodal curves with marked points; a nice summary of what is needed, with references, can be found in \cite[p. 20-21]{lieblichOsserman}.

\subsection{Versal deformation of a chain}
Begin with a single twice-marked chain $(X_0, p_0, q_0)$ of genus $g$. We will consider a versal deformation of $(X_0, p_0, q_0)$, i.e.\ a smooth morphism from a base scheme $B$, which we assume smooth and irreducible, to the stack $\overline{\cM}_{g,2}$. 
Note that here, the subscript $0$ refers to a deformation parameter, rather than indexing components in the chain.
This amounts to a flat proper morphism $\pi: X \to B$ with two sections $p,q: B \to X$. We denote the members of this family by $(X_b, p_b, q_b)$ for geometric points $b \in B$. Since the universal curve is a smooth stack, the total space $X$ is smooth. The morphism $\pi$ is not smooth, of course, but after shrinking $B$ if necessary we can identify that non-smooth locus as a disjoint union of $\ell$ codimension-$2$ subschemes $Z_1, \cdots Z_{\ell-1}$, corresponding to the nodes of $X_0$. In each member $(X_b, p_b, q_b)$ of the family, the components $Z_i$ meeting $X_b$ are in bijection with the nodes of $X_b$, which can each be viewed as a node of $X_0$ that has not been smoothed as $X_0$ deforms to $X_b$.
 The images $\pi(Z_i)$ are locally principal subschemes in $B$; shrinking $B$ if necessary we assume that they are principal. Each $\pi^{-1}(\pi(Z_i))$ is a principal divisor in $X$, which can be decomposed into a union of two divisors $Y_i, Y_i'$ meeting transversely at $Z_i$; we take $Y_i$ to be the ``upper half,'' so that for each $b \in \pi(Z_i)$, $C_b \cap Y_i$ contains $p_b$, while $Y'_i$ is the ``lower half'' containing $q_b$ for all such $b$. See \Cref{fig:deform}.

\begin{figure}

    \def\svgwidth{0.5\columnwidth}
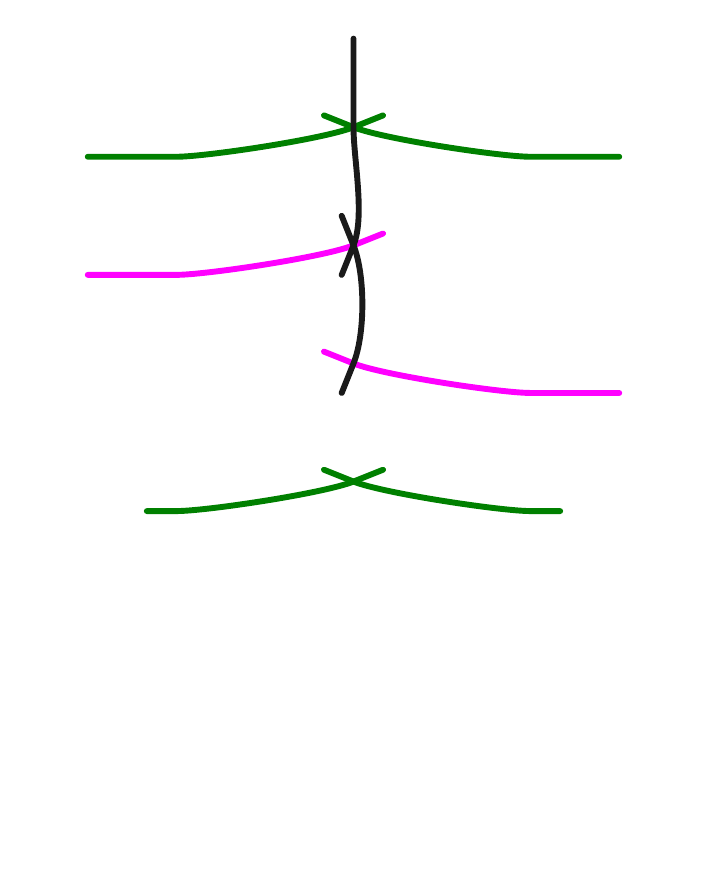
\caption{A versal deformation of a chain, and the divisors $Y_i, Y'_i$.}
\label{fig:deform}
\end{figure}

The divisors $Y_i, Y'_i$ explain why transmission loci were defined the way that they were for chains. Two line bundles $\cL, \cL'$ on the family $X$ agree on the smooth members if one is obtained from the other via twisting by these divisors. So it is reasonable to require our dimension bounds hold for \emph{all} such twists when working in this family. Now, the line bundles $\cO(Y_i)$ restrict to the family members meeting $Z_i$ (i.e.\ where the $i$th node has not been smoothed) as follows.
\begin{equation}
\label{eq:oxy}
\cO_X(Y_i) \mid_{Y_i} \cong \cO_{Y_i}(-Z_i),\text{ and } \cO_X(Y_i) \mid_{Y'_i} \cong \cO_{Y'_i}(Z_i).
\end{equation}
The assumption that $\pi(Z_i)$ is principal means that $\cO_X(Y_i') \cong \cO_X(-Y_i)$.

\subsection{Relative transmission loci} \label{ss:relative}
This suggests that we ought to define a relative transmission locus for this family as follows. 
For an $(\ell-1)$-tuple $\vec{n} \in \ZZ^{\ell-1}$, let $\cY(\vec{n}) = \cO_X\left( \sum_{i=1}^{\ell-1} n_i Y_i\right)$.
Fix a permutation $\tau \in \asp$ and let $d = \chi_\tau + g$. Denote by $\Pic^{d,p}(\pi) \to B$ the relative Picard scheme of line bundles having degree $d$ on the component of each fiber containing $p_b$ and degree $0$ on every other component (as in \cite[Notation 3.2.4]{lieblichOsserman}), and let $\rho: X \times_B \Pic^{d,p}(\pi) \to \Pic^{d,p}(\pi)$ be the projection. Let $\cU$ be a Poincar\'e line bundle on $X \times_B \Pic^{d,p}(\pi)$. The points of $\Pic^{d,p}(\pi)$ are pairs $(b,[\cL])$ of a point $b \in B$ and the isomorphism class of a line bundle $\cL$ on $X_b$ of the prescribed multidegree. Define a subscheme of $\Pic^{d,p}(\pi)$ as follows.
\begin{eqnarray*}
W^\tau(\pi,p,q) = \Big\{ x \in \Pic^{d,p}(\pi):\ &\dim \left(R^1 \rho_\ast \left(\cU\left(ap - bq \right) \otimes \cY(\vec{n}) \right)\right)_x \geq \sti(b,a+1)\\ &\text{ for all } a,b \in \ZZ,\ \vec{n} \in \ZZ^{\ell-1}\Big\}.
\end{eqnarray*}
Here we abuse notation slightly and write $p,q, \cY(\vec{n})$ for the pullbacks of divisors and line bundles on $X$ to $X \times_B \Pic^{d,p}$. For each choice of $a,b \in \ZZ, \vec{n} \in \ZZ^{\ell-1}$, the locus of $x \in \Pic^{d,p}(\pi)$ satisfying the inequality above has the natural structure of a closed subscheme, cut out by a \emph{Fitting ideal} of the sheaf $R^1 \rho_\ast \left(\cU(ap - bq) \otimes \cY(\vec{n})\right)$. Therefore $W^\tau(\pi,p,q)$ is a closed subscheme.

This construction is only useful if its fibers over smooth members $(X_b, p_b, q_b)$ coincide with the construction of $W^\tau(X_b, p_b, q_b)$. Fortunately, they do. Since the fibers of $\rho$ are $1$-dimensional, the theorem on cohomology and base change implies that, for all $t \in B$, 
the fiber $W^\tau(\pi,p,q)_t$ over $t$ is
\begin{equation}
\label{eq:tpiFiber}
\Big\{ [\cL] \in \Pic^{d,p_t}(X_t): h^1(X_t, \cL(ap_t - bq_t) \otimes \cY(\vec{n})_t ) \geq \sti(b,a+1) \text{ for all } a,b \in \ZZ, \vec{n} \in \ZZ^{\ell-1} \Big\},
\end{equation}
and when $X_t$ is smooth, we have $\cY(\vec{n})_t \cong \cO_{X_t}$ for all $\vec{n} \in \ZZ^{\ell-1}$. It follows that if $X_t$ is smooth, then the fiber $W^\tau(\pi,p,q)_t$ is none other than $W^\tau(X_t, p_t, q_t)$, as we would hope.

\begin{rem}
\label{rem:h1}
A word on our use of cohomology and base change may clarify why we use $h^1$ rather than $h^0$ here. The fact we have used is that if $f: X \to Y$ is a morphism, and $\cF$ is a sheaf on $X$, flat over $Y$, \emph{such that $H^2(X_y, \cF_y) = 0$ for all $y \in Y$} (e.g.\ if the fibers are one-dimensional as in our situation), then the natural map $\phi^1(y): (R^1 f_\ast \cF)_y \to H^1(X_y, \cF_y)$ is an isomorphism. See \cite[Theorem 12.11]{hartshorne}, whose notation we mimic in this remark.
\end{rem}

On the other hand, when $X_t$ is not smooth, the line bundles $\cY(\vec{n})_t$ on $X_t$ are determined up to isomorphism by Equation~\eqref{eq:oxy}. Importantly, these bundles are completely determined by the nodal curve $X_t$ itself, not anything about the geometry of the family. In fact, upon restricting Equation~\eqref{eq:oxy} to a single fiber $X_t$, we see that the bundle $\cO_X(Y_i) \mid_{X_t}$ is either one of the line bundles $\cY_j$ as in \Cref{defn:tauChain} if $Z_i$ meets $X_t$, or $\cO_{X_t}$ otherwise, and every node of $X_t$ corresponds to one of the $Z_i$. 

\begin{cor}
For all $t \in B$, the fiber $W^\tau(\pi,p,q)_t$ is isomorphic to $W^\tau(X_t, p_t, q_t)$.
\end{cor}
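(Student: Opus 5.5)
The plan is to compare the explicit description \eqref{eq:tpiFiber} of the fiber $W^\tau(\pi,p,q)_t$ with \Cref{def:WtauChain}. The smooth case was handled above, so assume $X_t$ is a chain. Since $X_t$ is a specialization of the chain $X_0$ in a versal family, it is obtained from $X_0$ by smoothing some of the nodes. So $X_t$ is itself a twice-marked chain, with marked points $p_t, q_t$ at opposite ends, and its nodes correspond to those $Z_i$ meeting $X_t$.

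The first step is to identify the twisting bundles. By the discussion following \Cref{rem:h1}, $\cO_X(Y_i)|_{X_t}$ is $\cO_{X_t}$ when $Z_i$ misses $X_t$. When $Z_i$ meets $X_t$, it is the bundle $\cY_j$ of \Cref{defn:tauChain} attached to the corresponding node $j$ of $X_t$; Equation~\eqref{eq:oxy} together with the convention that $Y_i$ is the half containing $p_t$ fixes the orientation. Hence, as $\vec n$ ranges over $\ZZ^{\ell-1}$, the bundles $\cY(\vec n)_t$ range exactly over the bundles $\cY(\vec m)$, $\vec m \in \ZZ^{\ell_t-1}$, of \Cref{defn:tauChain} for $X_t$, possibly with repetitions. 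I also need $\Pic^{d,p_t}(X_t)$ to coincide with $\Pic^{(d,0,\dots,0)}(X_t)$. This holds because the component containing $p_t$ carries degree $d$ and all other components carry degree $0$, matching the convention defining $W^\tau(X_t,p_t,q_t)$.

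The second step, which is the crux, converts the condition in \eqref{eq:tpiFiber} into the condition in \Cref{def:WtauChain}. The fiber condition says $h^1(X_t,\cL(ap_t-bq_t)\otimes\cY(\vec m))\ge \sti(b,a+1)$ for all $a,b,\vec m$. Set $\chi=\chi_\tau$. Every bundle $\cL(ap_t-bq_t)\otimes\cY(\vec m)$ has total degree $d+a-b$, so Riemann--Roch gives $h^0 = h^1 + \chi + a - b + 1$. By \eqref{eq:duality}, $\sti(b,a+1)=\st(a+1,b)-\chi-a-1+b$. The condition therefore becomes $h^0(X_t,\cL(ap_t-bq_t)\otimes\cY(\vec m))\ge \st(a+1,b)$ for all $\vec m$. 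Since this must hold for every $\vec m$, it is equivalent to the minimum over $\vec m$ being at least $\st(a+1,b)$, that is, $s^{p_t,q_t}_\cL(a+1,b)\ge \st(a+1,b)$. Reindexing $a$ gives exactly membership in $W^\tau(X_t,p_t,q_t)$. This is the remark made right after \Cref{def:kGTChain}, which I will use directly.

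The main obstacle is upgrading this pointwise set-theoretic equality to the isomorphism claimed in the statement. Set-theoretically the equality is all that the upper semicontinuity application requires, but a scheme-level comparison needs more care. To get it, I would note that the scheme structure on the chain $W^\tau(X_t,\dots)$ should be defined by the same Fitting ideals applied to the family over a point. Cohomology and base change for $R^1$ (\Cref{rem:h1}) commutes with restriction to the fiber, so the Fitting ideals restrict to Fitting ideals, and the identification of the $\cY$ bundles then gives the isomorphism.
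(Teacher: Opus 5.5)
Your proposal is correct and is essentially the paper's argument. Like the paper, you read off the fiber from \eqref{eq:tpiFiber}, use \eqref{eq:oxy} to identify each $\cO_X(Y_i)|_{X_t}$ with either $\cO_{X_t}$ or one of the bundles $\cY_j$ of Definition~\ref{defn:tauChain}, and convert the $h^1$ bounds into $h^0$ bounds via Riemann--Roch and \eqref{eq:duality}, as in the remark after Definition~\ref{def:kGTChain}. Your Fitting-ideal paragraph goes slightly beyond the paper, which treats this identification set-theoretically; also, a small wording slip: $X_t$ is a deformation of $X_0$ (with $X_0$ the special fiber), not a specialization of it.
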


\begin{proof}[Proof of \Cref{thm:semicontinuityT}]
The statement is local, so we may fix a chain $(X,p_1,q_\ell)$ and consider a versal deformation $\pi: X \to B$ of $(X,p_1,q_\ell)$ as above. It suffices to verify that $b \mapsto \dim W^\tau(X_t,p_t,q_t)_b$ is upper semicontinuous on $B$. This is the same as the function $b \mapsto \dim W^\tau(\pi,p,q)_b$, so the theorem follows from semicontinuity of fiber dimension.
\end{proof}

\section{Brill--Noether varieties and splitting loci}
\label{sec:specialPerms}

We now demonstrate that classical Brill--Noether loci and Hurwitz--Brill--Noether splitting loci are special cases of transmission loci, with the same expected codimensions. To do so, we first make a simplification to our description of transmission loci.

\subsection{The essential set}
In principle, our definition of transmission loci involves infinitely many inequalities. However, most of them are redundant. 

\begin{defn}[{\cite[Definition 7.6]{pflDemazure}}]
For any $\alpha \in \asp$, the \emph{essential set} of $\alpha$ is
$$\ess(\alpha) = \{ (a,b) \in \ZZ^2: \alpha^{-1}(a-1) \geq b > \alpha^{-1}(a),\ \alpha^{}(b-1) \geq a > \alpha^{}(b) \}$$
\end{defn}

This definition mirrors the ``essential set'' defined in \cite{fultonSchubert} for degeneracy loci in a finite-dimensional vector space, and plays the same role in our analysis. Its key property is the following.

\begin{prop}[{\cite[Corollary 7.9]{pflDemazure}}]
    Say that a permutation $\alpha$ has \emph{bounded difference} if $\{|\alpha(n)-n| \colon n \in \ZZ\}$ is bounded. If $\alpha,\beta \in \asp$ have the same shift and $\alpha$ has bounded difference, then $\alpha \leq \beta$ if and only if $\sa(a,b) \leq \sbe(a,b)$ for all $(a,b) \in \ess(\alpha)$.
\end{prop}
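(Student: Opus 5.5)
The forward implication is immediate: if $\alpha \le \beta$ then $\sa \le \sbe$ everywhere, so in particular on $\ess(\alpha)$. For the converse I plan to argue by contraposition, in the style of Fulton's essential-set argument. Suppose $\sa(a,b) > \sbe(a,b)$ for some $(a,b)$; I want to produce such a violation at a point of $\ess(\alpha)$. Write $D(a,b) = \sa(a,b) - \sbe(a,b)$. The basic tool is the pair of unit-step identities, valid for any $\gamma \in \asp$ directly from the definition $s_\gamma(a,b) = \#\{n \ge b : \gamma(n) < a\}$:
\[ s_\gamma(a+1,b) - s_\gamma(a,b) = \delta\big[\gamma^{-1}(a) \ge b\big], \qquad s_\gamma(a,b) - s_\gamma(a,b+1) = \delta\big[\gamma(b) < a\big]. \]
So both $\sa$ and $\sbe$ change by $0$ or $1$ under each unit step, and which case occurs for $\alpha$ is governed exactly by the four inequalities defining $\ess(\alpha)$.

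First I will show that violations can only occur in a diagonal strip, so that $D$ attains a maximum. If $a - b \le -M$ then $\sa(a,b) = 0$, so $D(a,b) \le 0$. If $a - b$ is large, I use Equation~\eqref{eq:duality} for both permutations; since $\ca = \cb$ this gives $D(a,b) = \sai(b,a) - \sbi(b,a) \le \sai(b,a)$. Because $\alpha$ (hence $\alpha^{-1}$) has bounded difference, $\sai(b,a) = 0$ once $a - b$ exceeds that bound. Thus $D$ is bounded above, and its positive values occur only for $|a-b| < M'$. Let $D_{\max} \ge 1$ be its maximum and $S$ the set of points where it is attained.

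Next I check four moves, each of which keeps $D$ from decreasing and hence keeps us inside $S$.
\begin{enumerate}
\item If $\alpha^{-1}(a-1) < b$, then $\sa(a-1,b) = \sa(a,b)$ while $\sbe(a-1,b) \le \sbe(a,b)$; move to $(a-1,b)$.
\item If $\alpha^{-1}(a) \ge b$, then $\sa$ rises by exactly $1$ and $\sbe$ by at most $1$; move to $(a+1,b)$.
\item If $\alpha(b) \ge a$, then $\sa(a,b+1) = \sa(a,b)$ while $\sbe$ does not increase; move to $(a,b+1)$.
\item If $\alpha(b-1) < a$, then $\sa$ rises by exactly $1$ and $\sbe$ by at most $1$; move to $(a,b-1)$.
\end{enumerate}
A point of $S$ at which none of these moves applies satisfies $\alpha^{-1}(a-1) \ge b > \alpha^{-1}(a)$ and $\alpha(b-1) \ge a > \alpha(b)$. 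That is exactly the statement that it lies in $\ess(\alpha)$, and there $\sa > \sbe$.

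The main obstacle is termination: $S$ lies in a strip but may be infinite along the diagonal, and the moves can in principle cycle. My plan is to organize the moves as follows.
\begin{itemize}
\item With $a$ fixed, $S$ meets the row in a bounded set of $b$. Moves (3) and (4) strictly change $b$ in a direction dictated by the sign of $\alpha(b) - a$, and an overshoot triggers the opposite move. So I can always settle on a $b$ with $\alpha(b) < a \le \alpha(b-1)$, namely a descent of $\alpha$ across the level $a$.
\item Symmetrically, in each column I can settle the $a$-conditions.
\end{itemize}
To combine the two, I would pick a point of $S$ minimizing a potential such as $\#\{n \ge b : \alpha(n) < a\} + \#\{n < b : \alpha(n) \ge a\}$, taken relative to a fixed base point and using bounded difference to keep it finite. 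I would then verify that each move, performed where it is applicable, strictly lowers this potential. If that potential fails, the fallback is the finite-symmetric-group trick of restricting to a large finite window, where $\alpha$ and $\beta$ agree with a shift outside the window by bounded difference, and then invoking Fulton's finite result.
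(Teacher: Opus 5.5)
Your reduction is sound. The unit-step identities, the four moves, the fact that each move keeps $D=s_\alpha-s_\beta$ from decreasing (so you stay in the maximizing set $S$), and the identification of a move-free point of $S$ with a point of $\ess(\alpha)$ are all correct. The strip argument via the duality identity also correctly uses that $\alpha$, hence $\alpha^{-1}$, has bounded difference. For $D$ to attain a maximum you additionally need $s_\alpha$ bounded on the strip, but with $K=\sup_n|\alpha(n)-n|$ one has $s_\alpha(a,b)\le\max(0,a-b+K)$, so that is fine.

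The gap is the termination step, and it fails exactly as you wrote it. Your potential
\[P(a,b)=\#\{n\ge b:\alpha(n)<a\}+\#\{n<b:\alpha(n)\ge a\}\]
is not lowered by the moves. Each of the four moves raises it by exactly $1$. For instance:
\begin{itemize}
\item In move (1), the element $m=\alpha^{-1}(a-1)<b$ is newly counted in the second set, while the first set is unchanged.
\item In move (3), the element $n=b$ (which has $\alpha(b)\ge a$) joins the second set, while the first set is unchanged.
\end{itemize}
More cleanly, by the duality identity $P(a,b)=s_\alpha(a,b)+s_{\alpha^{-1}}(b,a)=2s_\alpha(a,b)-\chi_\alpha-a+b$. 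The four moves change $(s_\alpha(a,b),a,b)$ by $(0,-1,0)$, $(+1,+1,0)$, $(0,0,+1)$, $(+1,0,-1)$ respectively, and each of these raises $P$ by $1$. So a minimizer of $P$ on $S$ can still admit a move, and the argument as proposed does not close.

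The repair is to run the potential the other way. Bounded difference gives $P(a,b)\le |a-b|+2K$, so $P$ is bounded above on the strip containing $S$. Choose a point of $S$ maximizing $P$. Any applicable move would produce a point of $S$ with larger $P$, so no move applies. That point therefore lies in $\ess(\alpha)$ and has $s_\alpha>s_\beta$. Equivalently, starting anywhere in $S$, every sequence of moves terminates after boundedly many steps. This disposes of your worry about cycling and makes the row/column bullets unnecessary.

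Your fallback should be discarded. $\beta$ is an arbitrary element of $\asp$ with no bounded-difference hypothesis. Even $\alpha$ need not agree with a shift outside any finite window: $\sigma^k_m$ is an example, and its essential set $\{(b,b): b\equiv m+1 \pmod k\}$ is infinite. So there is no reduction to Fulton's finite theorem along those lines.

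The paper itself gives no proof here, only the citation to \cite{pflDemazure}. With the direction of the potential fixed, your walk argument is a complete, self-contained proof.
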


\begin{cor}
\label{cor:essT}
For any twice-marked chain $(X,p,q)$ of genus $g$, permutation $\tau$ of bounded difference, and choice of $\vec{d}$ with sum $d = \chi_\tau + g$, we have
$$W_{\vec{d}}^\tau(X,p,q) = \Big\{ [\cL] \in \Pic^{\vec{d}}(X): s^{p,q}_\cL(a,b) \geq \st(a,b) \text{ for all } (a,b) \in \ess(\tau) \Big\}.$$
\end{cor}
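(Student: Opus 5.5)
The plan is to obtain \Cref{cor:essT} directly from the preceding proposition (\cite[Corollary 7.9]{pflDemazure}), applied with $\alpha = \tau$ and $\beta = \tau^{p,q}_\cL$. By \Cref{def:WtauChain}, $[\cL] \in W^\tau_{\vec{d}}(X,p,q)$ means $s^{p,q}_\cL(a,b) \geq \st(a,b)$ for all $(a,b) \in \ZZ^2$. By \Cref{cor:chainT}, for a chain this says $s_{\tau^{p,q}_\cL} \geq \st$ pointwise, and for $\ell=1$ \Cref{lem:tauExists} gives the same. So membership in $W^\tau_{\vec d}$ is exactly the Bruhat inequality $\tau \leq \tau^{p,q}_\cL$.

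Next we check the hypotheses of the proposition. The permutation $\tau$ has bounded difference by assumption. For shifts, \Cref{cor:chainT} gives $\chi_{\tau^{p,q}_\cL} = \deg \cL - g = d - g$, and $d = \chi_\tau + g$ by the choice of $\vec d$; hence the two shifts agree. The proposition then says $\tau \leq \tau^{p,q}_\cL$ holds if and only if $\st(a,b) \leq s_{\tau^{p,q}_\cL}(a,b)$ for all $(a,b) \in \ess(\tau)$. Rewriting $s_{\tau^{p,q}_\cL}$ as $s^{p,q}_\cL$ gives the displayed description, and so equality of the two sets as subsets of $\Pic^{\vec d}(X)$.

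The only point needing care is that the comparison is set-theoretic, and that both sides refer to the same permutation $\tau^{p,q}_\cL$. In particular, the minimum over twists $\cY(\vec n)$ in \Cref{defn:tauChain} must already be built into $s^{p,q}_\cL$. This is exactly what \Cref{cor:chainT} guarantees, so there is no real obstacle; the argument is a direct translation.
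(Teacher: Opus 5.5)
Your proof is correct and matches the paper's intended argument: the paper gives no separate proof and treats the corollary as immediate from the preceding proposition (\cite[Corollary 7.9]{pflDemazure}), applied with $\alpha = \tau$ and $\beta = \tau^{p,q}_\cL$. As you do, it uses \Cref{cor:chainT} to identify $s^{p,q}_\cL$ with $s_{\tau^{p,q}_\cL}$ (with the minimum over twists $\cY(\vec n)$ already built in) and to match the shifts $\chi_{\tau^{p,q}_\cL} = d - g = \chi_\tau$.
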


That is, we need only bound the transmission function for values of $(a,b)$ in the essential set.

\subsection{Brill--Noether loci}
For positive integers $g,r,d$ and a genus $g$ curve $C$, the classical Brill--Noether locus $W^r_d(C)$ is defined to be $\{ [\cL] \in \Pic^d(C): h^0(C,\cL) \geq r+1\}$. We are interested only in cases where $r+1, g-d+r \geq 1$, since otherwise $W^r_d(C) = \Pic^d(C)$. In these cases, the expected codimension of $W^r_d(C)$, as predicted, e.g., by Porteous's formula, is $(r+1)(g-d+r)$. \Cref{cor:essT} provides a route to identify Brill--Noether loci with transmission loci; we need only specify the right permutation.

\begin{defn}
Suppose $r \geq \max \{0, \chi+1\}$. Let $\gamma^r_\chi$ be the unique permutation that restricts to the unique \emph{increasing} bijection between the following sets. For readability, we use interval notation, but in each case we mean the intersection with $\ZZ$.
\begin{eqnarray*}
\left(-\infty, -1\right] &\xrightarrow{\sim}& \left(-\infty,-r-1\right] \cup \left[1, r-\chi\right] \\
\left[0, \infty\right) &\xrightarrow{\sim}& \left[ -r,0\right] \cup \left[r-\chi+1,\infty\right)
\end{eqnarray*}
\end{defn}

The following facts about $\gamma^r_\chi$ are straightforward to verify from definitions. 
\begin{lemma}
The essential set of $\gamma = \gamma^r_\chi$ is $\{(1,0)\}$, and $s_\gamma(1,0) = r+1$. The shift of $\gamma$ is $\chi$. The set of inversions of $\gamma$ is $([-(r-\chi), -1] \times [0,r]) \cap \ZZ^2$.
\end{lemma}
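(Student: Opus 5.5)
The plan is to make $\gamma = \gamma^r_\chi$ completely explicit and read each claim off its graph. On $(-\infty,-1]$, $\gamma$ is increasing with image $(-\infty,-r-1]\cup[1,r-\chi]$. Since that image has exactly $r-\chi$ elements in its bounded part, we get $\gamma(n) = n-r+\chi$ for $n \leq -(r-\chi)-1$, and $\gamma(-(r-\chi)+j-1) = j$ for $1 \leq j \leq r-\chi$. Similarly, on $[0,\infty)$ we have $\gamma(n) = n-r$ for $0 \leq n \leq r$, and $\gamma(n) = n-\chi$ for $n \geq r+1$. The far tails are thus the shifts $n \mapsto n - (r-\chi)$ on the left and $n \mapsto n-\chi$ on the right. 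The hypothesis $r \geq \max\{0,\chi+1\}$ ensures the blocks $[-r,0]$ and $[1,r-\chi]$ are nonempty, so these descriptions are valid.

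\textbf{Shift.} By \Cref{def:asp}, $\#\{n\ge 0:\gamma(n)<0\} = r$, namely $n=0,\dots,r-1$. Also $\#\{n<0:\gamma(n)\ge 0\} = r-\chi$, namely the $n$ mapping to $[1,r-\chi]$. Hence $\chi_\gamma = r-(r-\chi) = \chi$.

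\textbf{Inversions.} Within each of the two domain halves $\gamma$ is increasing, so every inversion $(m,n)$ has $m<0\le n$. For such a pair, $\gamma(m)>\gamma(n)$ forces $\gamma(m) \geq 1$, since elements of $(-\infty,-r-1]$ lie below all of $[-r,0]\cup[r-\chi+1,\infty)$. So $m \in [-(r-\chi),-1]$ and $\gamma(m) \in [1,r-\chi]$. It also forces $\gamma(n)\le 0$, since $[r-\chi+1,\infty)$ exceeds every value in $[1,r-\chi]$. So $n\in[0,r]$. Conversely, every such pair has $\gamma(m)\geq 1>0\geq\gamma(n)$, so it is an inversion. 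This gives exactly $[-(r-\chi),-1]\times[0,r]$, of size $(r+1)(r-\chi) = (r+1)(g-d+r)$.

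\textbf{Essential set and the value at $(1,0)$.} We have $s_\gamma(1,0) = \#\{n\ge0:\gamma(n)<1\} = \#\{n \ge 0 : \gamma(n) \le 0\} = r+1$, counting $n=0,\dots,r$. For the essential set, unwind the four conditions for $(a,b)\in\ess(\gamma)$:
\[
\gamma^{-1}(a-1)\ge b>\gamma^{-1}(a), \qquad \gamma(b-1)\ge a>\gamma(b).
\]
The second pair says $b-1$ is a descent of $\gamma$ with $a\in(\gamma(b),\gamma(b-1)]$. Since $\gamma$ is increasing on each half, the only descent is at $b-1=-1$, i.e.\ $b=0$, where $\gamma(-1)=r-\chi$ and $\gamma(0)=-r$. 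So $a\in[-r+1,r-\chi]$.

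The first pair says $a-1$ is a descent of $\gamma^{-1}$ with $b$ between the two preimages. The inverse $\gamma^{-1}$ is increasing on $(-\infty,-r-1]\cup[1,r-\chi]$ and on $[-r,0]\cup[r-\chi+1,\infty)$, as a union of two increasing pieces glued in value order. Its descents are therefore at values $a-1$ with $a-1\in\{\ldots\}$ lying in one piece and $a$ in the other, with the preimage decreasing. Checking the boundaries, $a-1=0$ (preimage $r$) and $a=1$ (preimage $-(r-\chi)$) is a descent. The other boundaries, $-r-1\to-r$ and $r-\chi\to r-\chi+1$, give increases: preimages $-(r-\chi)-1 < 0$ and $-1<r+1$ respectively. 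So $a=1$, and $b$ must satisfy $r \geq b > -(r-\chi)$, which holds for $b=0$. Intersecting the two conditions gives $\ess(\gamma)=\{(1,0)\}$, using $1\in[-r+1,r-\chi]$ since $r\ge0$ and $r-\chi\ge1$.

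The only place care is needed is this essential-set bookkeeping, in particular checking that the other "seams" of $\gamma^{-1}$ are not descents. Everything else is direct counting from the explicit formula.
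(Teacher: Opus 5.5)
Your verification is correct and is the same direct check the paper intends; the paper gives no argument beyond ``straightforward to verify from definitions.'' The shift, inversion, $s_\gamma(1,0)$ and essential-set computations all go through, including your check that the seams $-r-1\to -r$ and $r-\chi\to r-\chi+1$ are not descents of $\gamma^{-1}$.

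One slip to correct: on the far left, $\gamma(n)=n-\chi$, not $n-r+\chi$. The increasing bijection sends $-(r-\chi)-1\mapsto -r-1$; for example, $r=1,\chi=0$ gives $\gamma(-2)=-2$. So both tails are $n\mapsto n-\chi$. Nothing downstream uses the wrong formula, and the preimage $\gamma^{-1}(-r-1)=-(r-\chi)-1$ that you actually use in the essential-set step is right.
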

\begin{cor}
If $(C,p,q)$ is a genus $g$ smooth twice-marked curve, and $r,d$ are integers with $r \geq 0$ and $g-d+r > 0$, then $W^r_d(C) = W^{\gamma^r_{d-g}}(C,p,q)$, and $\inv(\gamma^r_{d-g}) = (r+1)(g-d+r)$. In particular, if $(C,p,q)$ has $0$-general transmission, then it is Brill--Noether general, in the sense that all $W^r_d(C)$ have dimension exactly $g - (r+1)(g-d+r)$.
\end{cor}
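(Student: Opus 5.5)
The plan is to apply \Cref{cor:essT} to $\gamma = \gamma^r_{d-g}$, with $\chi = d-g$, using the preceding lemma. First I will check that the hypotheses of the definition and lemma hold. We need $r \geq \max\{0,\chi+1\}$. Here $r \geq 0$ is given, and $g-d+r>0$ means exactly $r \geq \chi + 1$. The permutation $\gamma$ has bounded difference: it agrees with $n \mapsto n - \chi$ outside a finite window, because the tails $(-\infty,-1] \to (-\infty,-r-1]$ and $[0,\infty)\to[r-\chi+1,\infty)$ are increasing bijections of cofinite pieces. By the lemma, $\ess(\gamma)=\{(1,0)\}$ and $s_\gamma(1,0)=r+1$. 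So \Cref{cor:essT}, applied with $X = C$ smooth and $\vec d = (d)$, gives
$$W^{\gamma}(C,p,q) = \{[\cL]\in \Pic^d(C) : s^{p,q}_\cL(1,0) \geq r+1\}.$$
Since $s^{p,q}_\cL(1,0) = h^0(C, \cL(0\cdot p - 0 \cdot q)) = h^0(C,\cL)$, this is exactly $W^r_d(C)$. The degree matches because $\chi_\gamma + g = d$.

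Next I will count inversions. The lemma says $\Inv(\gamma)$ is the lattice rectangle $[-(r-\chi),-1]\times[0,r]$, which has $(r-\chi)(r+1) = (g-d+r)(r+1)$ elements. This is $\inv_0(\gamma)$.

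Then I will deduce the generality statement. If $(C,p,q)$ has $0$-general transmission, every component of $W^r_d(C) = W^{\gamma}(C,p,q)$ has codimension at least $(r+1)(g-d+r)$ in $\Pic^d(C)$. So every component has dimension at most $g-(r+1)(g-d+r)$. For the reverse inequality I will invoke the classical degeneracy-locus lower bound, cited in the introduction: every component of $W^r_d(C)$, when nonempty, has codimension at most $(r+1)(g-d+r)$. Together these give dimension exactly $g-(r+1)(g-d+r)$, with the usual convention that the locus is empty when this number is negative. That is also what "Brill--Noether general" means in the statement.

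The only real content is the lemma's computation of the essential set and the value $s_\gamma(1,0)=r+1$, which we may assume. If I had to verify it, I would check directly that $\#\{n\ge0:\gamma(n)<1\} = \#[-r,0] = r+1$. I would also check the essential-set conditions at $(a,b)=(1,0)$: $\gamma(-1) = r-\chi \geq 1 > \gamma(0) = -r$, and $\gamma^{-1}(0)=r \geq 0 > \gamma^{-1}(1) = -(r-\chi)$.
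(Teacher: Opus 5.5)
Your proposal is correct and is essentially the argument the paper has in mind (it states this corollary without separate proof). The identification $W^r_d(C)=W^{\gamma^r_{d-g}}(C,p,q)$ and the inversion count follow from \Cref{cor:essT} together with the preceding lemma ($\ess(\gamma)=\{(1,0)\}$, $s_\gamma(1,0)=r+1$, and the rectangular inversion set), and the reverse dimension inequality comes from the classical degeneracy-locus results of Kempf and Kleiman--Laksov cited in the introduction.

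One small point to make explicit: when $g-(r+1)(g-d+r)\ge 0$, ``dimension exactly'' also requires $W^r_d(C)$ to be nonempty. That is the Kempf/Kleiman--Laksov existence theorem, not the codimension upper bound you quote, which only constrains components that already exist.
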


\subsection{Hurwitz--Brill--Noether splitting loci}

Throughout this section, fix an integer $k \geq 2$. As in \cite{larsonLarsonVogt}, let $\cH_{g,k}$ denote the Hurwitz space, parameterizing degree-$k$ covers $\pi: C \to \PP^1$ from a genus $g$ smooth curve, and let $\cH_{g,k,2}$ denote the moduli space of degree-$k$ covers together with two marked points $p,q \in C$ of total ramification. Equivalently, this is the moduli space of twice-marked smooth curves $(C,p,q)$ such that $kp \sim kq$. Hurwitz--Brill--Noether theory concerns the Brill--Noether varieties of a general curve in $\cH_{g,k}$, and early work concerned the determination of $\dim W^r_d(C)$ for such curves \cite{cm99, cm02, pflKGonal, jensenRanganathan}. Cook-Powell--Jensen \cite{cpjComponents, cpjMethods} and Larson \cite{larsonHBN} independently refined the theory by observing that for a curve in $\cH_{g,k}$, $\Pic(C)$ has a much more refined and well-behaved stratification into \emph{splitting loci}. In particular, this refinement is naturally studied by Larson's results on splitting loci \cite{larsonDegen}.
Splitting loci answered a riddle originally present in the formula for $\dim W^r_d(C)$ conjectured in \cite{pflKGonal} and proved in \cite{jensenRanganathan}, which suggested that $W^r_d(C)$ is not equidimensional for general $k$-gonal curves; the reason is that $W^r_d(C)$ decomposes into several ``balanced splitting loci'' that may have different dimensions.
The full suite of classical theorems in Brill--Noether theory has recently been generalized to the context of splitting loci by Larson, Larson, and Vogt \cite{larsonLarsonVogt}; see also \cite{k3hbn} for a second development of this theory via K3 surfaces.

\begin{defn}
A \emph{splitting type} is a nondecreasing $k$-tuple $\vec{e} = (e_1, \cdots, e_k) \in \ZZ^k$. 
For a splitting type $\vec{e}$, let
$d(\vec{e}) 
= g - 1 + \sum_{n=1}^k (e_n+1),$
and let $x_{\vec{e}}: \ZZ \to \NN$ be the function
$$x_{\vec{e}}(m) = \sum_{n=1}^k \max \{ e_n + 1 + m, 0 \}.$$
For a cover $\pi: C \to \PP^1$ in $\cH_{g,k}$ with $P = \pi^\ast \cO_{\PP^1}(1)$, define the \emph{splitting locus}
$$W^{\vec{e}}(C,P) = \{ [\cL] \in \Pic^{d(\vec{e})}(C):\ h^0(C, \cL(mP)) \geq x_{\vec{e}}(m) \text{ for all } m \in \ZZ \}.$$
The \emph{expected codimension} of $W^{\vec{e}}(C,P)$ is the number $u(\vec{e}) = \sum_{1 \leq m,n \leq k} \max \{0, e_m - e_n-1\}$.
\end{defn}

Our notation differs slightly from \cite{larsonLarsonVogt}, in that we specify the divisor class $P$. If $k$ is the gonality of $C$, this can be left implicit, but we include it for emphasis and because for larger $k$ the choice of $P$ need not be unique.
The definition of splitting loci above is reminiscent of our definition of transmission loci, and indeed, this is not a coincidence: we show in this section that for $(C,p,q)$ in $\cH_{g,k,2}$, i.e.\ in the situation where $P \sim kp \sim kq$, splitting loci \emph{are} transmission loci, and the stratification by transmission loci may be viewed as a further refinement of the splitting type stratification. There is another, more geometrically meaningful description of splitting loci that explains their name: $[\cL]$ belongs to the open part of $W^{\vec{e}}(C,P)$ (the complement of all $W^{\vec{f}}(C,P) \subsetneq W^{\vec{e}}(C,P)$) if and only if $\pi_\ast \cL$ is isomorphic to $\cO_{\PP^1}(e_1) \oplus \cdots \oplus \cO_{\PP^1}(e_k)$.

Larson proved in \cite{larsonHBN}, via intersection theory techniques developed in \cite{larsonDegen}, that for \emph{every} point in $\cH_{g,k}$, $W^{\vec{e}}(C,P)$ is nonempty if $u(\vec{e}) \leq g$, and every component has codimension at most $u(\vec{e})$ if so. For a \emph{general point} in $\cH_{g,k}$, Larson \cite{larsonHBN} and Cook-Powell--Jensen \cite{cpjComponents} independently proved that every component of $W^{\vec{e}}(C,P)$ has codimension \emph{at least} $u(\vec{e})$; the results of this paper provide a new proof of that in the broader context of transmission loci. Much stronger results about irreducibility, smoothness, and monodromy of splitting loci can be found in \cite{larsonLarsonVogt}, and we conjecture that the same results should hold for transmission loci (\Cref{conj:llvResults}).

\subsection{Associating permutations to splitting loci}

For a point $(C,p,q)$ of $\cH_{g,k,2}$, we have $kp \sim kq$, and therefore every transmission permutation is in $\ts{k}$. There is a cover $\pi: C \to \PP^1$, totally ramified at $p$ and $q$, so the class of the fiber is $P = kp$. In particular, for all $m \in \ZZ$, $h^0(C, \cL(mP)) = s^{p,q}_\cL(1+mk, 0) = s^{p,q}_\cL(1+ak,bk)$ for all $a,b \in \ZZ$ such that $a-b = m$. So the transmission function, and hence the transmission permutation, of $\cL$ determines its splitting type. 
We demonstrate in this section how to read the splitting type from a permutation in $\ts{k}$, and how to identify a splitting locus with a transmission locus. The content of this section is not novel, and indeed the affine symmetric groups are used systematically to study splitting loci in \cite{larsonLarsonVogt} (see especially Theorem 1.4); we include this section only for completeness and to explain the story in our notation.

Call an extended affine permutation $\alpha \in \ts{k}$ \emph{(affine) bigrassmannian} if $\alpha$ is increasing on $\{0, \cdots, k-1\}$ and $\alpha^{-1}$ is increasing on $\{1, \cdots, k\}$. Bigrassmannian permutations have the property that $(a,b) \in \ess (\alpha)$ implies $a \equiv 1 \pmod{k}$ and $b \equiv 0 \pmod{k}$. In other words, the definition of $W^\alpha(C,p,q)$ imposes conditions only on $\cL$ and twists by multiples of $kp, kq$. Therefore these are the permutations that correspond to splitting types; our object in this section is to describe the correspondence between splitting types and bigrassmannian permutations in $\ts{k}$.

Call a permutation $\rho \in \ts{k}$ \emph{residual} if it restricts to a permutation of $\{ 0, \dots, k-1\}$. Residual permutations constitute a copy of $S_k \le \ts{k}$, given by extending $\rho \in S_k$ by $\rho(n+k) = \rho(n)+k$. For any $\alpha \in \ts{k}$, there exists a unique pair $(\rho, \pi)$ of a residual permutation and a $k$-\emph{periodic} function such that
$$\alpha(n) = \rho(n) + 1 + k \pi(n).$$
Conversely, every such pair $(\rho,\pi)$ gives an $\alpha \in \ts{k}$. This decomposition is convenient for studying the splitting type; the $+1$ above is a convenience, as we will see. If $\alpha$ has this decomposition, then its inverse is
$$\alpha^{-1}(n) = \rho^{-1}(n-1) - k \pi(\rho^{-1}(n-1)).$$
The residual permutation is irrelevant to the values of the slipface determining the splitting type of $\alpha$, because
\begin{eqnarray*}
s_{\alpha}(1+ak,bk) &=& \sum_{n=0}^{k-1} \#\{q \in \ZZ: n + qk \geq bk \text{ and } \alpha(n + qk) < 1 + ak \}\\
&=& \sum_{n=0}^{k-1} \max \{0, a-b-\pi(n)\}.
\end{eqnarray*}
So the splitting type of $\alpha$ is given by sorting the tuple $(-\pi(0)-1, \cdots, -\pi(k-1)-1)$ to nondecreasing order. In particular, this shows that every splitting type occurs for some $\alpha \in \ts{k}$. We may also obtain a useful bound on $\inv_k(\alpha)$ from $\pi$ alone. Observe that
$$
\inv_k(\alpha) \leq \# \left\{ (m,n): 0 \leq n < k,\ \floor{\frac{m}{k}} < \floor{\frac{n}{k}},\ \floor{ \frac{\alpha(m)-1}{k}} < \floor{ \frac{\alpha(n)-1}{k} } \right\},
$$
and equality holds if and only if $\alpha$ is bigrassmannian. This upper bound may be computed from $\pi$ alone. For fixed $0 \leq m,n < k$, consider which pairs $(m-qk, n)$ are counted by this upper bound. We have $\floor{ \frac{m-qk}{k} } < \floor{ \frac{n}{k}}$ if and only if $0 < q$, and $\floor{\frac{\alpha(m-qk)-1}{k}} < \floor{\frac{\alpha(n)-1}{k}}$ if and only if $\pi(m)-q < \pi(n)$. So the number of such pairs is $\max\{0, \pi(n) - \pi(m)-1\}$, and we deduce that 
\begin{equation}
\label{eq:invkPi}
\inv_k(\alpha) \leq \sum_{0 \leq m,n < k} \max\{0, \pi(n) - \pi(m)-1\},
\end{equation}
with equality if and only if $\alpha$ is bigrassmannian.

Conveniently, this equality case coincides with another useful situation: when the essential set consists only of pairs $(1+ak,bk)$. In this case, $W^\alpha(C,p,q)$ is identical to a splitting locus. Note that we use here the fact that if $\alpha$ is increasing on a set, then it is automatically increasing on any translate of that set by a multiple of $k$.
We will now classify the choices of $\rho, \pi$ for which this situation occurs. Observe that if $0 \leq m,n < k$, and we wish to determine which of $\alpha(m), \alpha(n)$ is larger, we can do so by comparing $\pi(m), \pi(n)$, and breaking a tie with $\rho(m), \rho(n)$. From this and the discussion above, we obtain the following classification. 

\begin{lemma}
Say that $\rho$ is \emph{increasing when $\pi$ is tied} if for all $0 \leq m,n < k$, if $\pi(m) = \pi(n)$ then $m<n$ if and only if $\rho(m) < \rho(n)$.
The permutation $\alpha$ defined above is increasing on $\{0, \cdots, k-1\}$ if and only if $\pi(0) \leq \cdots \leq \pi(k-1)$ and $\rho$ is increasing when $\pi$ is tied. On the other hand, $\alpha^{-1}$ is increasing on $\{1,\cdots, k\}$ if and only if $\pi(\rho^{-1}(0)) \geq \pi(\rho^{-1}(1)) \geq \cdots \geq \pi(\rho^{-1}(k-1))$ and $\rho$ is increasing when $\pi$ is tied.
\end{lemma}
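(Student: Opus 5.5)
The plan is to compare values of $\alpha$ directly, using the decomposition $\alpha(n) = \rho(n) + 1 + k\pi(n)$, and then apply the same reasoning to $\alpha^{-1}$ via the displayed inverse formula.

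\textbf{Step 1: comparing values of $\alpha$.} For $0 \leq m,n < k$ we have $\rho(m),\rho(n) \in \{0,\dots,k-1\}$. Hence $\alpha(m) - \alpha(n) = (\rho(m)-\rho(n)) + k(\pi(m)-\pi(n))$, and the first summand has absolute value at most $k-1$. It follows that $\alpha(m) < \alpha(n)$ if and only if either $\pi(m) < \pi(n)$, or $\pi(m)=\pi(n)$ and $\rho(m) < \rho(n)$.

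\textbf{Step 2: $\alpha$ increasing on $\{0,\dots,k-1\}$.} Suppose $\alpha$ is increasing there, and let $m<n$. If $\pi(m) > \pi(n)$, then Step 1 gives $\alpha(m) > \alpha(n)$, a contradiction. So $\pi$ is weakly increasing on $\{0,\dots,k-1\}$. When $\pi(m)=\pi(n)$, Step 1 says $\alpha(m)<\alpha(n)$ is equivalent to $\rho(m)<\rho(n)$, which is exactly the tie condition. Conversely, if $\pi$ is weakly increasing and $\rho$ is increasing when $\pi$ is tied, then for $m<n$ Step 1 yields $\alpha(m)<\alpha(n)$.

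\textbf{Step 3: $\alpha^{-1}$ increasing on $\{1,\dots,k\}$.} Write $n = j+1$ with $0 \leq j \leq k-1$. Then $\alpha^{-1}(j+1) = \rho^{-1}(j) - k\pi(\rho^{-1}(j))$, and $\rho^{-1}(j) \in \{0,\dots,k-1\}$. The same argument as in Step 1 shows $\alpha^{-1}(i+1) < \alpha^{-1}(j+1)$ if and only if either $\pi(\rho^{-1}(i)) > \pi(\rho^{-1}(j))$, or the two $\pi$-values are equal and $\rho^{-1}(i) < \rho^{-1}(j)$. Requiring this for all $i<j$ gives two conditions:
\begin{itemize}
\item $\pi(\rho^{-1}(0)) \geq \cdots \geq \pi(\rho^{-1}(k-1))$;
\item whenever $\pi(\rho^{-1}(i)) = \pi(\rho^{-1}(j))$ with $i<j$, we have $\rho^{-1}(i) < \rho^{-1}(j)$.
\end{itemize}
Setting $m=\rho^{-1}(i)$ and $n=\rho^{-1}(j)$, the second condition reads: if $\pi(m)=\pi(n)$, then $\rho(m)<\rho(n)$ implies $m<n$. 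Since $m \neq n$, this is equivalent to the biconditional in the definition of ``increasing when $\pi$ is tied.'' The converse direction is immediate from the same comparison criterion.

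\textbf{Main obstacle.} The only delicate point is the bookkeeping in Step 3: the index shift by $1$, the sign flip in front of $k\pi$, and rewriting the tie condition in terms of $\rho^{-1}$. Step 1 rests on the elementary fact that a difference of two residues lies strictly between $-k$ and $k$.
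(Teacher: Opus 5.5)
Your proof is correct and takes the same approach as the paper. The paper's justification is exactly your Step 1: compare the $\pi$-values, break ties with $\rho$, using that two residues differ by less than $k$. It applies this to $\alpha$ directly and to $\alpha^{-1}$ via the displayed inverse formula; you have written out the bookkeeping the paper leaves implicit, including the check that the one-directional tie condition is equivalent to the biconditional in the definition.
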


\begin{cor}
\label{cor:rho}
For a fixed $k$-periodic function $\pi$, there exists a residual permutation $\rho$ such that $\alpha = \rho + 1 + k \pi$ is bigrassmannian. If such $\rho$ exists, it is unique.
\end{cor}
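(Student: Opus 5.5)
The plan is to reduce everything to the preceding lemma. By that lemma, $\alpha = \rho + 1 + k\pi$ is bigrassmannian exactly when three conditions hold:
\begin{enumerate}
\item[(a)] $\pi(0) \leq \cdots \leq \pi(k-1)$;
\item[(b)] $\pi(\rho^{-1}(0)) \geq \pi(\rho^{-1}(1)) \geq \cdots \geq \pi(\rho^{-1}(k-1))$;
\item[(c)] $\rho$ is increasing when $\pi$ is tied.
\end{enumerate}
Condition (a) involves only $\pi$ and not $\rho$. So for existence I take $\pi$ nondecreasing on $\{0,\dots,k-1\}$, which is the situation in which the corollary is to be applied. For general $\pi$ I only claim uniqueness, since (a) fails whatever $\rho$ is. The task is then purely combinatorial: show there is exactly one residual $\rho$ satisfying (b) and (c).

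It is convenient to describe a residual $\rho$ by the sequence $\rho^{-1}(0), \rho^{-1}(1), \dots, \rho^{-1}(k-1)$, which is a listing of $\{0,\dots,k-1\}$. In these terms (b) says the listing is weakly decreasing in $\pi$. For $m,n$ with $\pi(m)=\pi(n)$, condition (c) says $m<n$ iff $\rho(m)<\rho(n)$, i.e.\ iff $m$ appears before $n$ in the listing. So (c) says that within each level set of $\pi$, elements appear in increasing order.

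\textbf{Existence.} Define the listing to be $\{0,\dots,k-1\}$ sorted by the total order in which $m$ precedes $n$ iff $\pi(m)>\pi(n)$, or $\pi(m)=\pi(n)$ and $m<n$. This is a lexicographic order, hence a genuine total order, so the sorted listing exists. Let $\rho$ be the inverse of this listing, extended $k$-periodically as in the description of residual permutations. By construction this $\rho$ satisfies (b) and (c). Together with (a), the lemma then shows $\alpha$ is bigrassmannian.

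\textbf{Uniqueness.} Suppose a listing satisfies (b) and (c). By (b), any $m,n$ with $\pi(m)>\pi(n)$ must have $m$ listed before $n$. By (c), any $m<n$ with $\pi(m)=\pi(n)$ must have $m$ listed first. Thus the listing respects the total order above on every pair, so it equals the sorted listing, and $\rho$ is determined.

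The only step needing care is translating (b) and (c) into statements about the listing $\rho^{-1}$; the rest is immediate. I would also add a remark that existence requires $\pi$ to be nondecreasing on $\{0,\dots,k-1\}$, since otherwise the statement as literally phrased is false.
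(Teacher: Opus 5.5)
Your proof is correct and follows the paper's argument: the paper also uses the lemma to observe that $\pi$ must be nondecreasing on $\{0,\dots,k-1\}$, so the existence claim is implicitly conditional on this, as you note. It then pins down $\rho$ by listing the indices in order of decreasing $\pi$, breaking ties in increasing order.

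Your listing $\rho^{-1}(0),\dots,\rho^{-1}(k-1)$ is in fact the accurate version of the paper's description. It gives $\rho(n)=\#\{m:\pi(m)>\pi(n)\}+\#\{m<n:\pi(m)=\pi(n)\}$. The paper's displayed formula instead has $\pi(m)<\pi(n)$, which violates condition (b): for $k=2$, $\pi(0)=0$, $\pi(1)=1$ it gives $\rho=\mathrm{id}$ and an $\alpha$ with $\alpha^{-1}(1)=0>\alpha^{-1}(2)=-1$.
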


\begin{proof}
The lemmas above show that it is necessary for $\pi$ to be nondecreasing on $\{0, \cdots, k-1\}$. If so, then $\alpha$ has the desired property if and only if $\rho$ is increasing when $\pi$ is tied, and precomposing with $\rho^{-1}$ reverses the order of $\pi(0), \cdots, \pi(k-1)$. This uniquely determines $\rho$: $\rho(0), \cdots, \rho(k-1)$ must consist of the indices for which $\pi(n)$ is maximum, in increasing order, followed by the indices where $\pi(n)$ is the second-largest value, in increasing order, and so on. Explicitly,
$$\rho(n) = \#\{ m: 0 \leq m < k,\ \pi(m) < \pi(n) \} + \#\{m: 0 \leq m < n,\ \pi(m) = \pi(n) \}$$ for all $0 \leq n < k.$ Note that this formula closely resembles the construction of the permutation $w(\vec{e})$ in \cite[Theorem 1.4]{larsonLarsonVogt}; up to some conventions, the two constructions are the same.
\end{proof}

\begin{defn}
For any splitting type (nondecreasing $k$-tuple) $\vec{e} = (e_1, \cdots, e_k)$, let $\gamma_{\vec{e}} = \rho + 1 + k \pi$, where $\pi$ is the $k$-periodic function determined by $\pi(n) = - e_{k-n}-1$ for $0 \leq n < k$ (so that $\pi(0) \leq \cdots \leq \pi(k-1)$) and $\rho$ is the permutation described in \Cref{cor:rho}.
\end{defn}

\begin{prop}
The permutation $\gamma = \gamma_{\vec{e}}$ has $s_\gamma(1+ak,bk) = x_{\vec{e}}(a-b)$ for all $a,b \in \ZZ$, $\ess(\gamma) \subseteq \{ (1+ak,bk):\ a,b \in \ZZ\}$, and $\inv_k(\gamma) = u(\vec{e})$.
\end{prop}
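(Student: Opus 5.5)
The plan is to derive all three claims from the decomposition $\gamma = \rho + 1 + k\pi$, using the computations already made in this subsection.

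\emph{The slipface values.} By construction $\pi(n) = -e_{k-n}-1$ for $0 \leq n < k$. The displayed formula gives
$$s_\gamma(1+ak,bk) = \sum_{n=0}^{k-1}\max\{0,a-b-\pi(n)\},$$
which is independent of $\rho$. Substituting $\pi(n)$ turns this sum into $\sum_{j=1}^k \max\{0, e_j+1+(a-b)\} = x_{\vec{e}}(a-b)$.

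\emph{The essential set.} The tuple $\pi(0),\dots,\pi(k-1)$ is nondecreasing because $\vec{e}$ is nondecreasing. Hence \Cref{cor:rho} applies, and $\rho$ is chosen so that $\gamma$ is bigrassmannian. I will then invoke the stated property of bigrassmannian permutations: $(a,b)\in\ess(\gamma)$ forces $a\equiv 1$ and $b \equiv 0 \pmod k$.

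If that property needs justification, I would argue directly from the definition of $\ess$. Membership requires $\gamma^{-1}(a-1) > \gamma^{-1}(a)$, so $\gamma^{-1}$ has a descent at $a-1$. It also requires $\gamma(b-1) > \gamma(b)$, so $\gamma$ has a descent at $b-1$. Since $\gamma$ is increasing on $\{0,\dots,k-1\}$ and all its translates by multiples of $k$, descents of $\gamma$ occur only at positions $\equiv k-1$, which forces $b\equiv 0$. Likewise $\gamma^{-1}$ is increasing on translates of $\{1,\dots,k\}$, which forces $a \equiv 1$.

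\emph{The inversion count.} Since $\gamma$ is bigrassmannian, equality holds in \eqref{eq:invkPi}, giving
$$\inv_k(\gamma) = \sum_{0\leq m,n<k}\max\{0,\pi(n)-\pi(m)-1\}.$$
Substitute $\pi(n) = -e_{k-n}-1$ and reindex by $i=k-m$, $j = k-n$. Then $\pi(n)-\pi(m) = e_i - e_j$, and the sum becomes $\sum_{1\le i,j\le k}\max\{0,e_i-e_j-1\} = u(\vec{e})$.

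The main obstacle is the equality case of \eqref{eq:invkPi}. The text asserts it, but it rests on the claim that for bigrassmannian $\alpha$ every $k$-class of inversions has a unique representative $(m,n)$ with $0\le n<k$, $\lfloor m/k\rfloor < \lfloor n/k\rfloor$, and $\lfloor(\alpha(m)-1)/k\rfloor<\lfloor(\alpha(n)-1)/k\rfloor$. Inversions whose entries lie in the same block or the same value-window are excluded by the two monotonicity conditions. I would verify this carefully if it is not taken as given, by translating any inversion so that $0\le n<k$ and then checking both floor conditions using the increasing property of $\gamma$ and of $\gamma^{-1}$ on blocks.
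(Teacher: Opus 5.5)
Your proposal is correct and follows essentially the same route as the paper's proof. The slipface values come from the displayed sum over $\pi$. The essential set is confined by the possible descent positions of $\gamma$ and $\gamma^{-1}$, which the bigrassmannian property from \Cref{cor:rho} provides. The inversion count is the equality case of \eqref{eq:invkPi} after reindexing.

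One small slip, inherited from the paper's own derivation, is in your side remark (the statement you would verify, not the main argument). For an inversion $(m,n)$ with $m<n$, the value condition should read $\lfloor(\alpha(m)-1)/k\rfloor > \lfloor(\alpha(n)-1)/k\rfloor$, not $<$. This gives $0<q<\pi(m)-\pi(n)$ for the representatives $(m-qk,n)$, which leads to the same symmetric sum \eqref{eq:invkPi}.
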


\begin{proof}
The computation of $s_\gamma(1+ak,bk)$ is carried out above. The description of $\gamma$ in \Cref{cor:rho} and identity $\gamma(n+k) = \gamma(n) + k$ implies that $\gamma(b) < \gamma(b-1)$ is only possible for $b \equiv 0 \pmod{k}$ and $\gamma^{-1}(a) < \gamma^{-1}(a-1)$ is only possible for $a \equiv 1 \pmod{k}$, hence $(a,b) \in \ess(\gamma)$ implies that $a \equiv 1 \pmod{k}$ and $b \equiv 0 \pmod{k}$.
Equation~\eqref{eq:invkPi} implies that $\inv_k(\gamma) = u(\vec{e})$.
\end{proof}

\begin{cor}
For any point $(C,p,q)$ of $\cH_{g,k,2}$, the splitting locus $W^{\vec{e}}(C,kp)$ is equal to the transmission locus $W^{\gamma_{\vec{e}}}(C,p,q)$, and the expected codimensions match. In particular, if $(C,p,q)$ has $k$-general transmission, then it is \emph{Hurwitz--Brill--Noether general}, in the sense that all its splitting loci have the expected dimension.
\end{cor}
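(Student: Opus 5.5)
The corollary has two parts: equality of loci, and matching codimensions. The generality statement then follows from them.

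For the equality, start from $P \sim kp \sim kq$. The line bundle $\cL(mP)$ is isomorphic to $\cL((1+ak-1)p - bkq)$ for any $a,b$ with $a-b=m$. Hence
\[h^0(C,\cL(mP)) = s^{p,q}_\cL(1+ak,bk).\]
Now take $[\cL] \in \Pic^{d(\vec{e})}(C)$. The degree $d(\vec{e})$ is the right one for $W^{\gamma_{\vec{e}}}$. To check this, compute $\chi_{\gamma_{\vec{e}}}$ from the asymptotics $s_\gamma(1+ak,bk) = x_{\vec{e}}(a-b)$ for $a-b \gg 0$. This gives $\chi_\gamma + k(a-b) + 1 = \sum_n (e_n+1) + k(a-b)$, so $\chi_\gamma = d(\vec{e}) - g$.

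Next apply \Cref{cor:essT}. We have $\gamma_{\vec{e}} \in \ts{k}$, so it has bounded difference, and membership in $W^{\gamma_{\vec{e}}}(C,p,q)$ only requires the inequalities at $\ess(\gamma_{\vec{e}})$. By the preceding proposition, every essential point has the form $(1+ak,bk)$. There the condition reads $s^{p,q}_\cL(1+ak,bk) \geq s_\gamma(1+ak,bk) = x_{\vec{e}}(a-b)$, which is exactly $h^0(\cL((a-b)P)) \geq x_{\vec{e}}(a-b)$.

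This gives both inclusions:
\begin{itemize}
\item \emph{Splitting locus into transmission locus.} If $[\cL] \in W^{\vec{e}}(C,kp)$, then all conditions for every $m$ hold. In particular the essential ones hold, so $[\cL] \in W^{\gamma_{\vec{e}}}(C,p,q)$.
\item \emph{Transmission locus into splitting locus.} If $[\cL] \in W^{\gamma_{\vec{e}}}(C,p,q)$, then $\tau^{p,q}_\cL \geq \gamma$ in Bruhat order. That is the full inequality $s^{p,q}_\cL \geq s_\gamma$ at all points, not only essential ones. Evaluating at $(1+mk,0)$ gives every splitting condition.
\end{itemize}
So both directions follow directly, and the essential set is needed only to see that no additional conditions are imposed.

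The codimension match is immediate from the preceding proposition: $\inv_k(\gamma_{\vec{e}}) = u(\vec{e})$.

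For the final claim, suppose $(C,p,q)$ has $k$-general transmission. Then every component of $W^{\vec{e}}(C,kp) = W^{\gamma_{\vec{e}}}(C,p,q)$ has codimension at least $u(\vec{e})$. Larson's theorem \cite{larsonHBN} gives codimension at most $u(\vec{e})$ on every component (when nonempty), so the dimension is exactly the expected one. If $u(\vec{e}) > g$, then $k$-general transmission makes the locus empty, which is again the expected answer.

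The only mild obstacle is the off-by-one bookkeeping: the $a-1$ in $\spql$ and the resulting shift/degree check. I would verify these carefully against \Cref{eq:shiftTau}.
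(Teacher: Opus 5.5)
Your proposal is correct and matches the argument the paper leaves implicit: the identity $h^0(C,\cL(mP)) = s^{p,q}_\cL(1+ak,bk)$, then \Cref{cor:essT} with $\ess(\gamma_{\vec e}) \subseteq \{(1+ak,bk)\}$, then $\inv_k(\gamma_{\vec e}) = u(\vec e)$ from the preceding proposition, with Larson's upper bound supplying the reverse codimension inequality, as the introduction indicates. Your explicit check that $\chi_{\gamma_{\vec e}} = d(\vec e) - g$, so that both loci lie in the same $\Pic^{d(\vec e)}(C)$, is a detail the paper does not state, and you have it right.
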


\section{Questions and conjectures}
\label{sec:questions}

We end with some conjectures and questions for future work. We mentioned the first conjecture in the introduction.

\begin{conj}
\label{conj:lowerBoundTtau}
Let $k\geq 2$. On \emph{any} $(C,p,q)$ with $kp \sim kq$, every component of $W^\tau(C,p,q)$ has codimension \emph{at most} $\inv_k(\tau)$.
\end{conj}

After it was circulated in an early draft of this paper, \Cref{conj:lowerBoundTtau} was proved by Daksh Aggarwal, along with other structural results on transmission loci. These results will appear in forthcoming work of Aggarwal.

\begin{conj}
\label{conj:llvResults}
All statements of \cite[Theorem 1.2]{larsonLarsonVogt} generalize from splitting loci to transmission loci of a general point in $\cH_{g,k,2}$ (or $\cM_{g,2}$, for $k=0$). In particular, the intersection theory class of $W^\tau(C,p,q)$ is 
$$[W^\tau(C,p,q)] = \frac{N(\tau)}{\inv_k(\tau)!} \Theta^{\inv_k(\tau)},$$
where $N(\tau)$ is the number of reduced words for $\iota_{-\chi_\tau} \tau$ (this $\iota_{-\chi_\tau}$ serves to convert $\tau$ to something of shift $0$). Furthermore, at \emph{any} point of $\cH_{g,k,2}$, $W^\tau(C,p,q)$ supports this intersection class.
\end{conj}

In the case $k=0$, this enumerative formula follows from the results of \cite{pflVersality}, so the $k \geq 2$ case is of primary interest. It seems very plausible that the methods of \cite{larsonLarsonVogt} can be adapted to prove this conjecture. A notable special case is $\inv_k(\tau) = g$; as observed in \Cref{eg:reducedWords} the Demazure product machinery gives a bijection between reduced words for $\tau$ and points of $W^\tau(X,p,q)$ on an elliptic chain $X$, and more generally \Cref{prop:decompW} points to the very explicit link between reduced words and transmission loci on elliptic chains that is provided by the Demazure product.

\begin{conj}
Let $\pi: X \to B$ be a versal family in $\cM_{g,2}$ (if $k=0$) or $\cH_{g,k,2}$ (if $k \geq 2$).
In a relative transmission locus $W^\tau(\pi,p,q)$, every component has codimension at most $\inv_k(\tau)$.
\end{conj}

The importance of this conjecture is that it would allow a ``regeneration theorem'', akin to the regeneration theorem for limit linear series, to be proved for transmission loci: if a transmission locus has the expected dimension on a singular curve, then this conjecture would show that the locus is part of a component that also lies over nearby smooth curves with the expected dimension.

\begin{qu}
For which permutations $\tau$ and genera $g$ does there exist a smooth twice-marked curve $(C,p,q)$ of genus $g$ and line bundle $\cL$ with $\tpql = \tau$? 
\end{qu}

This question is likely to be quite hard to answer in full generality, since it strictly generalizes the question of which Weierstrass semigroups occur on marked algebraic curves, which is still wide open (see, e.g., \cite{kaplanYe}). The next question is a variation that adds an expected dimension requirement, similar to the study of dimensionally proper Weierstrass points in \cite{eh87}, for example.
It is analogous to questions about Brill--Noether theory in negative expected dimension, such as those addressed in \cite{pflThriftyLego} and \cite{teixidorBNLoci}.

\begin{qu}
Given $\tau \in \ts{k}$ and a genus $g < \inv_k(\tau)$, suppose that $(C,p,q)$ and $\cL$ are such that $W^{\tau}(C,p,q)$ is nonempty. Call a point of $W^\tau(C,p,q)$ \emph{dimensionally proper} if, in a versal deformation $\pi: X \to B$ of $(C,p,q)$ in $\cM_{g,2}$ (if $k=0$) or $\cH_{g,k,2}$ (if $k \geq 2$), $(C,p,q,[\cL])$ belongs to a component of $W^\tau(\pi,p,q)$ of dimension exactly the expected dimension $\dim B + g - \inv_k(\tau)$. For which $\tau, g$, do there exist such dimensionally proper points? 
\end{qu}

Finally, I believe that the formula for the algebraic Euler characteristic of Brill--Noether varieties in terms of set-valued Young tableaux \cite{cpEuler,actKclasses} generalizes to transmission loci in the following form. This would in particular give a formula for the Euler characteristics of splitting loci on general $k$-gonal curves. My basis for believing this conjecture is the observation that the combinatorial analysis of \cite{cpEuler} can be translated to the language of transmission permutations. To prove it, however, requires better information about relative transmission loci; most importantly, flatness criteria are required. It also seems likely that the degeneracy locus techniques of \cite{actKclasses,actMotivic} are adaptable to transmission loci.

\begin{conj}
Let $\tau \in \ts{k}$ have shift $0$ and $\inv_k(\tau) \le g$. Let $(C,p,q)$ be a general point in $\cM_{g,2}$ (if $k = 0$) or a general point of $\cH_{g,k,2}$ (if $k \geq 2$). Define a \emph{length-$g$ Hecke word} for $\tau$ to be a factorization
\[ \tau = \alpha_1 \star \alpha_2 \star \cdots \star \alpha_g \]
where each $\alpha_i$ has shift $0$ and $\inv_k(\alpha_i) \le 1$. Equivalently, either $\alpha_i = \text{id}$ or $\alpha_i = \sigma^k_m$ for some $m \in \ZZ$. Let $H^\tau_g$ denote the set of length-$g$ Hecke words for $\tau$. Then
\[ \chi \left( W^\tau(C,p,q), \cO_{W^\tau(C,p,q)} \right) = (-1)^{g - \inv_k(\tau)} \; \# H^\tau_g.\]
Note in particular that this formula generalizes \Cref{eg:reducedWords}.
\end{conj}
 


\section*{Acknowledgements}
This work was supported by a Miner D. Crary Sabbatical Fellowship from Amherst College.
Sam Payne, Dave Jensen, and Daksh Aggarwal provided helpful comments on early drafts.
I am also grateful to my son Anders, sixteen months old at the time, who was my principal collaborator during the early days of the pandemic, supervised almost all the work in the first draft of this paper, and exhorted me to rethink my approach to \Cref{sec:specialPerms} by pulling a sheet of notes about it off my desk and attempting to eat it.


\bibliographystyle{amsalpha}
\bibliography{main}
\end{document}

%% file: deformation-v2.pdf_tex
\begingroup%
  \makeatletter%
  \providecommand\color[2][]{%
    \errmessage{(Inkscape) Color is used for the text in Inkscape, but the package 'color.sty' is not loaded}%
    \renewcommand\color[2][]{}%
  }%
  \providecommand\transparent[1]{%
    \errmessage{(Inkscape) Transparency is used (non-zero) for the text in Inkscape, but the package 'transparent.sty' is not loaded}%
    \renewcommand\transparent[1]{}%
  }%
  \providecommand\rotatebox[2]{#2}%
  \newcommand*\fsize{\dimexpr\f@size pt\relax}%
  \newcommand*\lineheight[1]{\fontsize{\fsize}{#1\fsize}\selectfont}%
  \ifx\svgwidth\undefined%
    \setlength{\unitlength}{339.75978821bp}%
    \ifx\svgscale\undefined%
      \relax%
    \else%
      \setlength{\unitlength}{\unitlength * \real{\svgscale}}%
    \fi%
  \else%
    \setlength{\unitlength}{\svgwidth}%
  \fi%
  \global\let\svgwidth\undefined%
  \global\let\svgscale\undefined%
  \makeatother%
  \begin{picture}(1,1.22812678)%
    \lineheight{1}%
    \setlength\tabcolsep{0pt}%
    \put(0,0){\includegraphics[width=\unitlength,page=1]{deformation-v2.pdf}}%
    \put(0.91549417,0.67676115){\color[rgb]{0,0,0}\makebox(0,0)[lt]{\lineheight{0}\smash{\begin{tabular}[t]{l}$Z_2$\end{tabular}}}}%
    \put(-0.00117049,0.83981227){\color[rgb]{0,0,0}\makebox(0,0)[lt]{\lineheight{0}\smash{\begin{tabular}[t]{l}$Z_1$\end{tabular}}}}%
    \put(0.91539869,1.00214157){\color[rgb]{0,0,0}\makebox(0,0)[lt]{\lineheight{0}\smash{\begin{tabular}[t]{l}$p$\\\end{tabular}}}}%
    \put(0.92578834,0.50155631){\color[rgb]{0,0,0}\makebox(0,0)[lt]{\lineheight{0}\smash{\begin{tabular}[t]{l}$q$\end{tabular}}}}%
    \put(0.36375779,0.67428252){\color[rgb]{0,0,0}\makebox(0,0)[lt]{\lineheight{0}\smash{\begin{tabular}[t]{l}$Y_1'$\end{tabular}}}}%
    \put(0.62339063,0.61001632){\color[rgb]{0,0,0}\makebox(0,0)[lt]{\lineheight{0}\smash{\begin{tabular}[t]{l}$Y_2'$\end{tabular}}}}%
    \put(0.45769932,1.20690812){\color[rgb]{0,0,0}\makebox(0,0)[lt]{\lineheight{0}\smash{\begin{tabular}[t]{l}$X_0$\end{tabular}}}}%
    \put(0,0){\includegraphics[width=\unitlength,page=2]{deformation-v2.pdf}}%
    \put(0.53995975,0.38710569){\color[rgb]{0,0,0}\makebox(0,0)[lt]{\lineheight{0}\smash{\begin{tabular}[t]{l}$\pi$\end{tabular}}}}%
    \put(0,0){\includegraphics[width=\unitlength,page=3]{deformation-v2.pdf}}%
    \put(0.91539869,0.05338206){\color[rgb]{0,0,0}\makebox(0,0)[lt]{\lineheight{0}\smash{\begin{tabular}[t]{l}$\pi(Z_2)$\end{tabular}}}}%
    \put(-0.0203896,0.05338206){\color[rgb]{0,0,0}\makebox(0,0)[lt]{\lineheight{0}\smash{\begin{tabular}[t]{l}$\pi(Z_1)$\end{tabular}}}}%
    \put(0,0){\includegraphics[width=\unitlength,page=4]{deformation-v2.pdf}}%
    \put(0.48272859,0.00550351){\color[rgb]{0,0,0}\makebox(0,0)[lt]{\lineheight{0}\smash{\begin{tabular}[t]{l}$0$\end{tabular}}}}%
    \put(0.3750431,0.92874669){\color[rgb]{0,0,0}\makebox(0,0)[lt]{\lineheight{0}\smash{\begin{tabular}[t]{l}$Y_1$\end{tabular}}}}%
    \put(0,0){\includegraphics[width=\unitlength,page=5]{deformation-v2.pdf}}%
    \put(0.62533573,0.84531581){\color[rgb]{0,0,0}\makebox(0,0)[lt]{\lineheight{0}\smash{\begin{tabular}[t]{l}$Y_2$\end{tabular}}}}%
    \put(0,0){\includegraphics[width=\unitlength,page=6]{deformation-v2.pdf}}%
  \end{picture}%
\endgroup%